\newcommand{%
	
	\import{./figures/}{.pdf_tex}
}[1]{%
	
	\import{./figures/}{#1.pdf_tex}
}
\newcounter{casenum}
\newtheorem{thm}{Theorem}[section]
\newtheorem{lem}[thm]{Lemma}
\newtheorem{cor}[thm]{Corollary}
\newtheorem{conj}[thm]{Conjecture}
\newtheorem*{thm*}{Theorem}
\DeclareMathOperator{\Span}{Span}
\title{Real Mahler Series}
\author{Andean E. Medjedovic}
\date{}
\begin{document}

\maketitle

\begin{abstract}
	Let $\alpha$ and $\beta$ be positive real numbers. Let $F(x) \in K[[x^\Gamma]]$ be a Hahn series.
	We prove that if $F(x)$ is both $\alpha$-Mahler and $\beta$-Mahler then it must be a rational function, $F(x) \in K(x)$, assuming
	some non-degeneracy conditions on $\alpha$ and $\beta$.
\end{abstract}

\setcounter{tocdepth}{1}
\tableofcontents
\newpage

\section{Introduction}
During the 1920's Mahler was able to show that the function
\begin{equation}{\label{ex_1}}
	F(x) = \sum_{n=0}^{\infty} x^{2^n}
\end{equation}
takes on transcendental values for every algebraic number $0 < x < 1$ \cite{mahtrans}. Since then his work has been generalized and the \textit{Mahler Method}
is now a staple of transcendence theory \cite{mahstaple}. Further work by Cobham has highlighted connections to automata theory \cite{cob1} \cite{cob2}. Cobham was able to prove
the following.
\begin{thm}[Cobham]
	Suppose $k$ and $l$ are multiplicatively independent integers. Then if a sequence of natural numbers is both $k$-automatic and $l$-automatic, it
	is eventually periodic.
\end{thm}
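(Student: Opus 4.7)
My plan is to combine the $k$-kernel characterization of automaticity with a density argument that exploits multiplicative independence of $k$ and $l$. Recall that $(a_n)_{n \geq 0}$ over a finite alphabet is $k$-automatic if and only if its $k$-kernel
\[
\mathcal{K}_k(a) = \{(a_{k^e n + r})_{n \geq 0} : e \geq 0,\ 0 \leq r < k^e\}
\]
is finite. Under the hypothesis, both $\mathcal{K}_k(a)$ and $\mathcal{K}_l(a)$ are finite. I would argue by contradiction: assuming $(a_n)$ is not eventually periodic, I aim to show that in fact it must have bounded factor complexity, contradicting the Morse--Hedlund theorem.

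First, I would reduce to a binary sequence by replacing $a_n$ with the indicator $\mathbf{1}\{a_n = c\}$ for each letter $c$, and study the density $f_N = \#\{n < N : a_n = c\}/N$. A lemma I would establish from the finiteness of $\mathcal{K}_k(a)$ is that the subsequence $(f_{k^e})_{e \geq 0}$ satisfies a contractive affine recurrence on the finite-dimensional span of the kernel, hence converges to a limit $\alpha_k \in [0,1]$. The same argument along powers of $l$ gives convergence to some $\alpha_l$, and since both limits are taken along subsequences of the single convergent (at least along $k^e$) sequence $(f_N)$, one must have $\alpha_k = \alpha_l = \alpha$.

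Next I would transfer these density identities to actual rigidity. Since $\log k / \log l \notin \mathbb{Q}$, Kronecker's theorem says the semigroup generated by $k$ and $l$ is dense on the logarithmic scale, so I can pick pairs $(m_i, n_i)$ with $k^{m_i}/l^{n_i} \to 1$. Comparing the kernel subsequences $(a_{k^{m_i} n + r})$ and $(a_{l^{n_i} n + r'})$ inside the two finite kernels should force each element of $\mathcal{K}_k(a)$ to agree, on arbitrarily long initial segments, with some element of $\mathcal{K}_l(a)$. A counting argument would then bound the factor complexity of $(a_n)$ in terms of $|\mathcal{K}_k(a)|\cdot|\mathcal{K}_l(a)|$, and Morse--Hedlund delivers eventual periodicity.

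The hard step is the density-transfer in the preceding paragraph. Multiplicative independence is an \emph{asymptotic} statement about ratios $k^m/l^n$, whereas kernel membership is a rigid \emph{exact} condition on subsequences; converting approximate commensurability of scales into exact structural identifications is the crux of Cobham's theorem and the reason the original proof is famously long. I expect one either has to retrace Cobham's frequency-counting argument via careful block decomposition, or to pass through the logical framework of Bruy\`ere--Hansel--Michaux--Villemaire, in which $k$-automaticity corresponds to first-order definability in $\langle \mathbb{N}, +, V_k \rangle$ and the intersection of the two such theories is known to coincide with Presburger arithmetic, whose definable sets are precisely the eventually periodic ones.
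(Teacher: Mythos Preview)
The paper does not prove Cobham's theorem. It is stated in the introduction as a known result, attributed to Cobham with citations, and accompanied by the remark that ``the proof is infamous for its difficulty despite using only elementary techniques.'' The paper's own contributions (Theorems~\ref{main} and~\ref{main2}) concern the Mahler-function analogue, and in the one place where the integer case $\alpha,\beta\in\mathbb{Z}$ arises, the paper appeals to the Adamczewski--Bell theorem rather than to Cobham directly. So there is no paper proof to compare against.

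As for your proposal on its own terms: it is an outline, not a proof, and you say as much. You correctly identify the standard ingredients (finiteness of the $k$-kernel, frequencies along powers of $k$, Kronecker density of $\{k^ml^{-n}\}$, Morse--Hedlund), but the paragraph you label ``the hard step'' is precisely the entire content of Cobham's theorem, and you resolve it only by pointing to Cobham's original argument or to the Bruy\`ere--Hansel--Michaux--Villemaire logical route. That is a literature reference, not an argument. One smaller point: your contradiction structure is tangled. If you succeed in bounding the factor complexity, Morse--Hedlund \emph{gives} you eventual periodicity; it is not contradicted. So the argument should be direct (bounded complexity $\Rightarrow$ eventually periodic), not by contradiction. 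Also, the density step as written is too weak: knowing that $f_{k^e}$ and $f_{l^e}$ converge to a common limit $\alpha$ is a statement about a single letter frequency and carries essentially no structural information about the kernel subsequences; the real work in Cobham-type proofs is at the level of \emph{blocks} of arbitrary length, not single-letter densities.
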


The proof is infamous for its difficulty despite using only elementary techniques. The correspondence between $k$-automatic sequences and what are known as Mahler functions has been
widely known for at least 15 years \cite{cob3}. Cobham's theorem naturally leads to the analogous question for Mahler functions, which we will now introduce. The astute reader will immediately
verify that the series in equation \ref{ex_1} satisfies the below functional equation:
\begin{equation}
	F(x^2) = F(x) - x.
\end{equation}
We say that particular $F(x)$ is $2$-Mahler of degree $1$. Generalizing this notion, suppose a series satisfies
\begin{equation}
	P_d(x)F( x^{k^d}) + \ldots + P_1(x)F\left( x^k \right) + P_0(x)F\left( x \right)  = A(x).
\end{equation}
We will then say that $F$ is $k$-Mahler of degree $d$. Here $P_i$ and $A$ are polynomials. An equivalent formulation would be to say that
\begin{equation}
	\{1, F(x), F(x^k), F(x^{k^2}), \ldots\}
\end{equation}
forms a $d$-dimensional vector space over $K[x]$.\\

Let $\alpha > 0$ and $\beta > 0$ be real numbers. The reader will recall that if $\frac{\log(\alpha)}{\log(\beta)} \not\in \mathbb{Q}$ then $\alpha$
and $\beta$ are said to be multiplicatively independent. Loxton and Van der Poorten conjectured the following counterpart of Cobham's theorem for Mahler functions
\begin{conj}[Loxton and Van der Poorten]
	Let $k$ and $l$ be multiplicatively independent positive integers. Let $F \in K[[x]]$ be a power series over a number field $K$ that is both $k$-
	and $l$-Mahler. Then $F$ is rational.
\end{conj}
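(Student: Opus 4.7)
The plan is to exploit the simultaneous presence of two Mahler equations to constrain $F$ so tightly that it must be rational. First I would translate the two hypotheses into linear-algebraic data: $F$ being $k$-Mahler of some degree $d_k$ means the family $\{F(x^{k^i})\}_{i \geq 0}$ spans a $K(x)$-vector space of dimension at most $d_k$, and symmetrically for $l$. A short calculation then shows the larger family $\{F(x^{k^i l^j})\}_{i, j \geq 0}$ lies in a $K(x)$-vector space of dimension at most $d_k d_l$: apply the $k$-equation to reduce the exponent $i$ below $d_k$, then substitute $x \mapsto x^{l^j}$ into the $l$-equation to reduce $j$ below $d_l$. This gives a tightly controlled pool of functional relations.

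Next I would extract analytic information. A standard dichotomy for $k$-Mahler functions says that $F$ is either a rational function or admits the unit circle as a natural boundary, with a rigid pole structure concentrated at roots of unity whose orders are governed by the leading polynomial $P_{d_k}(x)$ of the $k$-Mahler equation. Multiplicative independence of $k$ and $l$ implies that the semigroup generated by $x \mapsto x^k$ and $x \mapsto x^l$ acts on roots of unity with orbit orders ranging over $\{k^a l^b : a, b \geq 0\}$, a set which is dense on a logarithmic scale. This set escapes any finite list of allowed orders, so the natural-boundary scenario cannot be reconciled with both Mahler equations at once.

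To make the rough ``incompatible singularity structures'' heuristic rigorous, I would reduce modulo a carefully chosen prime. Pick a prime $\mathfrak p$ of $K$ coprime to $kl$ and to the leading coefficients of both Mahler equations after clearing denominators; then the reduction $\overline F \in \overline{\mathbb F}_p[[x]]$ remains both $k$- and $l$-Mahler. In positive characteristic, $k$-Mahler power series correspond bijectively to $k$-automatic sequences, so the coefficient sequence of $\overline F$ is simultaneously $k$-automatic and $l$-automatic. Cobham's theorem from the introduction then forces this sequence to be eventually periodic, meaning $\overline F \in \overline{\mathbb F}_p(x)$. Finally, one lifts this conclusion back to characteristic zero by running the argument at infinitely many primes and combining the rationality witnesses with the finite-dimensionality obtained in step one.

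The main obstacle is the lift back to characteristic zero: a single good prime only tells us that $F$ is congruent to a rational function mod $\mathfrak p$, not that $F$ itself is rational. Controlling the degrees of the candidate rational functions uniformly in $\mathfrak p$ requires genuine Diophantine input, such as the Schmidt subspace theorem or carefully tuned valuation estimates, and I expect this step to dwarf the rest of the proof in technical difficulty. The other delicate point worth flagging is the normalization needed to guarantee that the Mahler equations survive the reduction with their leading coefficients intact; this is routine but must be done before any of the above machinery can be invoked.
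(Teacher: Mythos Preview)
The paper does not prove this statement. It is presented in the introduction as a conjecture, immediately followed by the remark that it was settled by Adamczewski and Bell (and independently by Sch\"afke and Singer); the present paper then \emph{invokes} the Adamczewski--Bell theorem as a black box in Section~6 (``The case where both $\alpha$ and $\beta$ are integers is the Adamczewski--Bell theorem''). There is therefore no in-paper proof against which to compare your proposal.

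On the proposal itself: your step~1 is correct and is essentially Proposition~8.1 of Adamczewski--Bell, which this paper adapts as Lemma~\ref{combo}. Step~3 is shakier than you present it: the relevant correspondence in positive characteristic is between $k$-Mahler and $k$-\emph{regular} (via Becker's theorem and its converse), and $k$-regular collapses to $k$-automatic only when the coefficient sequence takes finitely many values --- which you get over $\mathbb{F}_p$ but not over the $\overline{\mathbb{F}}_p$ you wrote, and only after checking that the Mahler equations reduce well. You correctly flag the lift back to characteristic zero as the real obstacle, and indeed the published proofs are long precisely because that step (or its analytic analogue in Sch\"afke--Singer) is genuinely hard; ``combining the rationality witnesses'' is a many-page argument, not a bookkeeping exercise. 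Your outline is a reasonable caricature of one of the known strategies, but it is not a proof, and in any case the paper under review does not supply one for comparison.
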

Notice that the converse of the theorem holds trivially, rational functions are $\alpha$-Mahler for any $\alpha$.
In 2013 this conjecture was resolved by Bell and Adamczewski who were the first to prove the more general theorem, over any field of characteristic $0$ \cite{Bell19} .
\begin{thm}[Bell and Adamczewski]
	Let $k$ and $l$ be multiplicatively independent positive integers. Let $F \in K[[x]]$ be a power series over a field of characteristic $0$, $K$, that is both $k$-
	and $l$-Mahler. Then $F$ is rational.
\end{thm}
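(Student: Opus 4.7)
The plan is to argue by contradiction: assume $F$ is both $k$- and $l$-Mahler but not rational, and extract an arithmetic impossibility from the interplay of the two Mahler relations. First I would reduce to the case that $K$ is a number field. The two Mahler relations involve only finitely many elements of $K$, so $F$ is defined over a finitely generated extension of $\mathbb{Q}$; a specialization argument via Hilbert irreducibility, applied so as to preserve the non-vanishing of a suitable Hankel minor certifying irrationality of $F$, replaces the transcendental generators by specific values. Thereafter assume $K/\mathbb{Q}$ is finite.

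Next, set up the linear algebra. Being $k$-Mahler says $V_k := \sum_i K(x) F(x^{k^i})$ is $K(x)$-finite-dimensional, say of dimension $d_k$, and likewise $V_l$ has dimension $d_l$. Substituting $x \mapsto x^{l^j}$ in the $k$-Mahler relation expresses $F(x^{k^{d_k} l^j})$ as an element of $K(x) + \sum_{i < d_k} K(x) F(x^{k^i l^j})$; combined with the analogous $l$-reduction, an induction on $i+j$ shows the full span $W := \sum_{i,j \geq 0} K(x) F(x^{k^i l^j})$ is finite-dimensional of dimension at most $d_k d_l + 1$. In particular, for every pair $(i,j)$ the series $F$ is $k^i l^j$-Mahler of degree bounded uniformly in $i,j$.

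The heart of the argument is Diophantine: from $F$ admitting Mahler relations with respect to every element of the multiplicatively independent semigroup $\{k^i l^j\}$, of uniformly bounded degree, derive the desired rationality. Multiplicative independence $\log k/\log l \notin \mathbb{Q}$ makes the multiplicative group generated by $k$ and $l$ dense in $\mathbb{R}_{>0}$; thus the exponents $k^i l^j$ accumulate densely on the logarithmic scale. The uniform bound on degrees combined with this density should force the family of Mahler equations to collapse into a single algebraic equation satisfied by $F$ over $K(x)$. Once algebraicity of $F$ is secured, a short classical argument finishes the job: an algebraic power series that is $k$-Mahler must be rational, since the finitely many branch singularities in the open unit disk would otherwise proliferate under pullback by $x \mapsto x^{1/k}$.

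The main obstacle is converting density plus bounded degree into algebraicity; this is where the entire difficulty of Bell--Adamczewski concentrates. The route I would follow is to choose a sequence of algebraic points $\alpha_n$ approaching the boundary of convergence of $F$, apply the matrix form of each Mahler equation to the evaluated vector $\mathbf{F}(\alpha_n)$, and invoke the Schmidt subspace theorem to deduce that certain linear forms in these evaluations vanish too often to be compatible with irrationality of $F$. The delicate points are choosing the $\alpha_n$ so that Schmidt's theorem is effective, and quantitatively exploiting the multiplicative independence rather than merely its qualitative consequence of density; the linear-algebraic and finishing steps described above are comparatively routine.
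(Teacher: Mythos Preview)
The paper does not prove this theorem. It is stated in the introduction as a known result of Adamczewski and Bell, cited as \cite{Bell19}, and is later invoked as a black box in Section~6 to dispose of the case where both $\alpha$ and $\beta$ are integers. There is therefore no ``paper's own proof'' to compare your proposal against.

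On the proposal itself: your preliminary steps are sound and standard. The reduction to a finitely generated (hence number) field is correct, and the linear-algebra step showing that $F$ is $k^i l^j$-Mahler of degree bounded independently of $i,j$ is exactly right --- a version of it even appears in this paper as Lemma~\ref{combo}. The finishing step ``algebraic $+$ Mahler $\Rightarrow$ rational'' is also classical. The genuine gap is precisely where you locate it: turning ``bounded-degree Mahler for a dense set of exponents'' into algebraicity. No known proof does this via the Schmidt subspace theorem, and your sketch of how Schmidt would apply (evaluate at boundary points, bound linear forms) does not visibly connect the Diophantine input to the Mahler structure. The actual Adamczewski--Bell argument proceeds completely differently after your reductions: it passes to residue fields of positive characteristic, uses Christol's theorem to convert ``Mahler'' into ``automatic'', applies Cobham's original theorem there to obtain rationality modulo infinitely many primes, and then lifts back to characteristic zero. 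The alternative Sch\"afke--Singer proof goes through the Galois theory of linear difference equations. Your subspace-theorem route, as stated, is a strategy whose decisive step remains speculative.
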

The theorem was also proven by Schafke and Singer through entirely different methods in 2017 \cite{Singer}.
In this paper we will extend the result beyond positive integers. The main results are as follows.

\begin{thm}{{\label{main}}}
	Let $\alpha>0$ and $\beta>0$ be two multiplicatively independent elements of $\mathbb{Q}$. Suppose $F(x)$ is a Hahn series over $\mathbb{R}$ that is both $\alpha$- and $\beta$-Mahler.
	Then $F(x)$ is rational.
\end{thm}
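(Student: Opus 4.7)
The plan is to reduce Theorem~\ref{main} to the integer-exponent Bell--Adamczewski theorem via a two-step reduction: first control the support of $F$ as a Hahn series, then repackage $F$ as an ordinary Laurent series satisfying integer Mahler relations. Write $\alpha = a/b$ and $\beta = c/e$ in lowest terms with $a,b,c,e$ positive integers, and let $\sigma_\alpha, \sigma_\beta$ denote the commuting substitution operators $F(x) \mapsto F(x^\alpha)$ and $F(x) \mapsto F(x^\beta)$ on the Hahn-series field. Note $\sigma_\alpha \sigma_\beta = \sigma_{\alpha\beta} = \sigma_\beta \sigma_\alpha$, so the two Mahler hypotheses combine to give that the joint orbit $\{\sigma_\alpha^i \sigma_\beta^j F\}$ lies in a single finite-dimensional $\mathbb{R}[x]$-module.

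The first and crucial step is to show that $S := \Supp(F) \subset \mathbb{R}$ is contained in a cyclic lattice $\tfrac{1}{N}\mathbb{Z}$ for some positive integer $N$. The $\alpha$-Mahler equation $\sum_i P_i(x) F(x^{\alpha^i}) = A(x)$ has polynomial coefficients with integer exponents on both sides, whereas $F(x^{\alpha^i})$ has support $\alpha^i S$. For every real number $r$ appearing in some $\alpha^i S + \mathbb{Z}_{\geq 0}$ but not in $\mathbb{Z}_{\geq 0}$, the $x^r$-coefficient of the left-hand side must vanish, producing strong cancellation constraints on $S$. Together with the analogous constraints from the $\beta$-equation and the well-ordering of $S$, these cancellations should force every element of $S$ into a common denominator lattice; multiplicative independence of $\alpha$ and $\beta$ prevents $S$ from spreading across incompatible lattices.

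Once $S \subset \tfrac{1}{N}\mathbb{Z}$ is established, the substitution $G(y) := F(y^N)$ produces a Laurent series in $y$. From the Mahler equations for $F$, combined with iteration and an appropriate choice of substitution variable, I would then derive classical integer $k$-Mahler and $\ell$-Mahler relations for $G$ with integers $k,\ell$ constructed from $a, b, c, e$ and with $k,\ell$ still multiplicatively independent (a property inherited from $\alpha,\beta$ via the fact that $\log k / \log \ell$ differs from $\log\alpha/\log\beta$ only by a rational offset determined by the reduction). The Bell--Adamczewski theorem then concludes that $G$, and hence $F(x) = G(x^{1/N})$, is a rational function.

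The main obstacle will be the support-reduction step. The Mahler hypotheses only give quasi-invariance of $S$ under scaling by $\alpha$ and $\beta$, modulo integer shifts from the polynomial coefficients and cancellations from the finite-degree relations; extracting a uniform rational denominator from this data, while respecting the well-ordering of $S$, will require a careful leading-term analysis. I anticipate inducting on the Mahler degrees $d_\alpha$ and $d_\beta$, tracking the minimal element of $S$ through both equations, and invoking multiplicative independence to rule out the support escaping into ever-finer subdivisions of the real line. The final reduction-to-integers step, by contrast, I expect to be largely bookkeeping once the support has been tamed.
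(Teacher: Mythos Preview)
Your support-reduction step (Step~1) is broadly in line with the paper's strategy: the paper, too, first shows that after a substitution $x\mapsto x^l$ one may assume $P(F)\subset\mathbb{Z}$. Its route is more explicit than yours---it decomposes $F$ over equivalence classes $T(s)=\alpha^{\mathbb{Z}}s+\mathbb{Z}[\alpha,\alpha^{-1}]$, shows that only rational $s$ contribute, and then uses that $F$ is $\alpha^n\beta^m$-Mahler for all bounded $n,m$ (your ``joint orbit'' observation) together with a $p$-adic trick: for each prime $p$ one can pick $n,m$ with $v_p(\alpha^n\beta^m)=0$, so $P(F)$ lands in an intersection of rings $\mathbb{Z}[\alpha^n\beta^m,(\alpha^n\beta^m)^{-1}]$ that avoids $\tfrac{1}{p}$ for every $p$. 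That is the mechanism by which multiplicative independence is actually used, and your sketch would need something of this sort to be made rigorous.

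The genuine gap is your Step~3. Once $G(y)=F(y^N)$ is a Laurent series, you propose to manufacture \emph{integer} $k$- and $\ell$-Mahler equations for $G$ and invoke Bell--Adamczewski. But $G$ inherits only the $\alpha$- and $\beta$-Mahler equations (and their $\alpha^n\beta^m$ combinations), and for generic non-integer rationals there is no choice of $n,m$ making $\alpha^n\beta^m$ an integer $>1$: e.g.\ for $\alpha=\tfrac{3}{2}$, $\beta=\tfrac{5}{2}$ one has $\alpha^n\beta^m=3^n5^m2^{-(n+m)}$, which is a non-trivial integer for no $(n,m)$. Your claim that $\log k/\log\ell$ differs from $\log\alpha/\log\beta$ by a rational offset has no construction behind it. The paper handles this case by a completely different endgame: it proves independently (Theorem~\ref{ez}) that a \emph{Laurent} series satisfying an $\alpha$-Mahler equation with $\alpha\in\mathbb{Q}\setminus\mathbb{Z}$ is already rational, via a leading-exponent argument showing $\alpha^n-\alpha^m\notin\mathbb{Z}$ obstructs the required cancellations. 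Bell--Adamczewski is invoked only in the subcase where both $\alpha$ and $\beta$ are integers, where no reduction is needed. You will need either this Laurent-series rigidity result or some substitute for it; the bookkeeping you anticipate in Step~3 does not exist.
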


\begin{thm}{{\label{main2}}}
	Let $\alpha>0$ and $\beta>0$ be two algebraically independent elements of $\mathbb{R}$. Suppose $F(x)$ is a Hahn series over $\mathbb{R}$ that is both $\alpha$- and $\beta$-Mahler.
	Then $F(x)$ is rational.
\end{thm}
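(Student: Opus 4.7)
The plan is to reduce Theorem \ref{main2} to Theorem \ref{main} via a specialization argument that exploits the algebraic independence of $\alpha$ and $\beta$. I would begin by writing both Mahler equations for $F$ explicitly:
\[
\sum_{i=0}^{d_\alpha} P_i(x)\,F(x^{\alpha^i}) = A(x), \qquad \sum_{j=0}^{d_\beta} Q_j(x)\,F(x^{\beta^j}) = B(x),
\]
with $P_i, Q_j, A, B \in \mathbb{R}[x]$. The support $S := \Supp(F) \subset \mathbb{R}$ must be highly structured, since each equation forces the image of $S$ under multiplication by $\alpha^i$ (respectively $\beta^j$) to close up modulo translates arising from the degrees of the polynomial coefficients. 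The first task is to prove that, after a harmless rescaling, $S$ lies inside the $\mathbb{Q}$-algebra $\mathbb{Q}[\alpha, \beta] \subset \mathbb{R}$, which by algebraic independence is a polynomial ring in two variables.

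Assuming this structural result, for any choice of multiplicatively independent positive rationals $\alpha_0, \beta_0$, I would consider the $\mathbb{Q}$-algebra homomorphism $\sigma: \mathbb{Q}[\alpha, \beta] \to \mathbb{Q}$ sending $\alpha \mapsto \alpha_0$ and $\beta \mapsto \beta_0$. One then defines the specialized Hahn series
\[
F_\sigma(x) := \sum_{\gamma \in S} c_\gamma\, x^{\sigma(\gamma)},
\]
and similarly specializes the polynomial coefficients of the two Mahler equations. Next I would verify that $F_\sigma$ is a bona fide Hahn series (well-orderedness of $\sigma(S)$ and non-collapse of leading terms requires that $(\alpha_0,\beta_0)$ avoid a meagre set of degeneracies) and that it inherits the corresponding $\alpha_0$- and $\beta_0$-Mahler equations. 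Theorem \ref{main} then yields $F_\sigma \in \mathbb{R}(x)$. To propagate rationality back to $F$, I would view the rational presentation of $F_\sigma$ as the image of a ``generic'' presentation over $\mathbb{Q}[\alpha, \beta][x]$: the linear systems determining the numerator and denominator coefficients descend from the generic picture, and choosing sufficiently many independent specializations $\sigma$ pins down a unique rational form for $F$ itself.

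\textbf{Main obstacle.} The heart of the argument is the initial structural claim that $\Supp(F) \subset \mathbb{Q}[\alpha, \beta]$ after rescaling. Without this, the specialization $\sigma$ is not even defined on the exponents of $F$; with it, the remainder of the scheme is careful bookkeeping to ensure well-orderedness survives the specialization and that the lift back is unambiguous. Handling the interaction between the $\alpha$-support and $\beta$-support constraints, and showing that they are compatible only when $S$ lies in this polynomial ring, is exactly where the algebraic independence hypothesis must do the real work: only over a purely transcendental extension is specialization flexible enough to deform $F$ into a rational-parameter Mahler series without loss of information.
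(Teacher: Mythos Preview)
Your approach is entirely different from the paper's, and it carries genuine gaps that are not mere bookkeeping. The paper gives a direct valuation argument in a few lines: looking at the minimal exponent $v(F)$, cancellation in the $\alpha$-equation forces $(\alpha^{l_1}-\alpha^{l_2})\,v(F)\in\mathbb{Z}$ for some $l_1\neq l_2$, and the $\beta$-equation likewise gives $(\beta^{k_1}-\beta^{k_2})\,v(F)\in\mathbb{Z}$; dividing produces a rational relation between a polynomial in $\alpha$ and a polynomial in $\beta$, contradicting algebraic independence. There is no specialization and no appeal to Theorem~\ref{main}.

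In your scheme, at least two steps would fail as stated. First, well-orderedness of $\sigma(S)$ is not automatic: the ring map $\sigma:\mathbb{Q}[\alpha,\beta]\to\mathbb{Q}$ is not order-preserving (for instance $\alpha-\alpha_0$ is positive or negative depending on nothing you control, and is sent to $0$), so a well-ordered $S\subset\mathbb{R}$ can map to a subset of $\mathbb{Q}$ with infinite descending chains or with massive collapsing of distinct exponents. Saying that $(\alpha_0,\beta_0)$ must avoid a ``meagre set of degeneracies'' does not help, because the bad set depends on the a priori unknown infinite support $S$ and may well be all of $\mathbb{Q}^2$. Second, the lift back is not linear algebra over $\mathbb{Q}[\alpha,\beta]$: rationality of each $F_\sigma$ produces, for each $\sigma$, a numerator and denominator whose degrees you have no uniform bound on, and your ``generic rational presentation over $\mathbb{Q}[\alpha,\beta][x]$'' presupposes exactly the conclusion you want. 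Finally, even the preliminary inclusion $S\subset\mathbb{Q}[\alpha,\beta]$ is not available from the paper's toolkit in the irrational case: the relevant support lemma (Lemma~\ref{decomp1}) explicitly uses that $\alpha$ is rational, so you would have to supply a new argument there as well.
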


\section{A proof sketch}

Here we outline the structure of the argument giving a summary of each section.

\begin{enumerate}
	\item In the next section we will define and summarize some basic facts about Hahn series. We then go on to prove a few quick lemmas that allow us to
		reduce the form of the Mahler functional equations to something more manageable, at least in the rational case. Later on in the proof we will use some more reductions
		of a similar nature that will be proven as we need them.
	\item Afterward, we will restrict our vision for a moment, and look at only Laurent series instead of Hahn series. The goal of this section is to prove a nonexistence result
		of $\alpha$-Mahler Laurent series when $\alpha$ is not an integer. This result, while somewhat interesting in its own right, will allow
		us to make further claims when dealing with the more general case of Hahn series.
	\item We will analyze the supports of Hahn series under the Mahler functional equations to gain a broad understanding of how they interact. This will then prove invaluable
		in making further reductions to a more tractable problem.
	\item Finally, we will tackle the problem and prove theorem \ref{main} under the assumption that $\alpha$ and $\beta$ are rational. In the case
		that $\alpha$ and $\beta$ are integers we appeal to the Adamczewski-Bell theorem. In the other case we aim to reduce the Hahn series
		by putting together all the results found in the previous sections.
	\item Some additional details are needed when one of $\alpha$ or $\beta$ is irrational. We delineate these cases and prove theorem \ref{main2} for them.
		This completes the paper.
\end{enumerate}

\section{Preliminaries}

In this section we introduce some definitions and notation.
We will also take the time to prove some elementary facts and reductions that will be helpful to us later on.
Let $K$ be a number field with $K[x], K(x), K[[x]],$ and $K[[x]][x^{-1}]$ denoting polynomials, rational functions, power series and Laurent series over $K$. %
We will use $(p,q)$ as a shorthand for the greatest common denominator of integers $p$ and $q$.

\subsection{Hahn series}
Let $\Gamma$ be an ordered group. Consider the set of formal expressions, $F,$ so that
\begin{equation}
	F(x) = \sum_{i \in \Gamma}^{} f_i x^i
\end{equation}
with $f_i \in K$ and the additional constraint that the support of $F$ is well-ordered. We will use $P(F)$ to designate this set of non-zero powers appearing in $F$,
\begin{equation}
	P(F) := \{ i \in \Gamma : f_i \ne 0\}.
\end{equation}
Such a formal expression is known as a Hahn series or sometimes as a Hahn-Mal'cev-Neumann series. Of course, one can add Hahn series together in the natural
way. We require $P(F)$ to be well-ordered to ensure that multiplication of Hahn series is well defined. If $F = \sum_{i \in \Gamma}f_i x^i$ and $G = \sum_{i \in \Gamma}g_i x^i$ are two Hahn series then
\begin{equation}
	F(x)G(x) = \sum_{i \in \Gamma} \left( \sum_{j_1+j_2 = i} f_{j_1}g_{j_2}\right) x^{i}.
\end{equation}
Since $P(F)$ and $P(G)$ are well-ordered the inner sum has finitely many non-zero terms and is therefore well-defined. The field of all Hahn series over $K$ with group $\Gamma$ will be indicated
by $K[[x^\Gamma]]$. This field has the following valuation which we will sometimes make use of
\begin{equation}
	v(F) := \min_{ i \in P(F)} i.
\end{equation}
Unless otherwise stated, we default to $\Gamma = \mathbb{R}$.

\subsection{Reductions in the Rational case}
Suppose for a moment that $\alpha = \frac{p_1}{q}$ and $\beta = \frac{p_2}{q}$ are rationals, and we do not necessarily have $\left(p_1, q  \right) =
\left( p_2,q \right) = 1$. Let $F$ be both $\alpha$- and $\beta$-Mahler as a Hahn series. We show in this subsection that it is possible to obtain from $F(x)$,
some other Hahn series with desirable properties.
\begin{gather}{\label{defmah}}
	P_d(x)F( x^{\alpha^d}) + \ldots + P_1(x)F\left( x^\alpha \right) + P_0(x)F\left( x \right)  = A(x)\\
	Q_d(x)F( x^{\beta^d}) + \ldots + Q_1(x)F\left( x^\beta \right) + Q_0(x)F\left( x \right)  = B(x)
\end{gather}

\begin{lem}{\label{homo}}
	We can re-write the functional equations \ref{defmah} as $\alpha$- and $\beta$-Mahler with
	$A(x)$ and $B(x)$ = 0.
\end{lem}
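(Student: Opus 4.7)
The plan is to use a standard ``inhomogenization'' trick for operator equations. Introduce the Mahler operator $M_\alpha$ sending a Hahn series $G(x)$ to $G(x^\alpha)$, which satisfies the commutation relation $M_\alpha \, P(x) = P(x^\alpha) \, M_\alpha$ on the level of operators acting on Hahn series. The first equation in \ref{defmah} can then be rewritten compactly as $L_\alpha F = A$, where $L_\alpha := \sum_{i=0}^{d} P_i(x)\, M_\alpha^i$.

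The key observation is that left-multiplication by the degree-one operator $R := A(x^\alpha) - A(x) M_\alpha$ kills the constant $A(x)$, since $R \cdot A(x) = A(x^\alpha) A(x) - A(x) A(x^\alpha) = 0$. Applying $R$ to both sides of $L_\alpha F = A$ therefore yields a new equation $(R \circ L_\alpha) F = 0$ of order $d+1$ in $M_\alpha$. Expanding, the coefficient of $F(x^{\alpha^0})$ is $A(x^\alpha) P_0(x)$, the intermediate coefficients of $F(x^{\alpha^i})$ are $A(x^\alpha) P_i(x) - A(x) P_{i-1}(x^\alpha)$ for $1 \le i \le d$, and the coefficient of $F(x^{\alpha^{d+1}})$ is $-A(x) P_d(x^\alpha)$. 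These lie in the subring of finite Hahn series -- or equivalently, after writing $\alpha = p_1/q$, in $K[x^{1/q}]$ -- which is the natural coefficient ring in the rational Mahler setting. The identical procedure carried out with $M_\beta$ in place of $M_\alpha$ produces the analogous homogeneous $\beta$-Mahler equation.

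Non-triviality is automatic: if $A = 0$ there is nothing to do, and if $A \neq 0$ then the leading coefficient $-A(x) P_d(x^\alpha)$ is non-zero since $P_d \neq 0$ by hypothesis on the order $d$. The main obstacle, such as it is, is conceptual rather than technical -- spotting the left-annihilator $R = A(x^\alpha) - A(x) M_\alpha$; once it is written down the rest is a single line of algebra, and no deeper property of Hahn series or of Mahler functions is needed at this step.
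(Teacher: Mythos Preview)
Your approach is essentially the same as the paper's: both kill the inhomogeneous term by taking a suitable $K[x]$-linear combination of the equation and its $\alpha$-shift. The paper phrases it as ``apply $x\mapsto x^{\alpha q}$, multiply by $A(x)$; multiply the original by $A(x^{\alpha q})$; subtract,'' while your operator formulation $R\circ L_\alpha$ with $R = A(x^\alpha) - A(x)M_\alpha$ is a cleaner way to say the same thing.

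One point to tighten: the paper's definition of $\alpha$-Mahler requires the coefficients $P_i$ to be honest polynomials in $K[x]$, whereas your new coefficients $A(x^\alpha)P_i(x) - A(x)P_{i-1}(x^\alpha)$ lie only in $K[x^{1/q}]$, since $A(x^\alpha)=A(x^{p_1/q})$ need not be a polynomial. You flag this but wave it off as ``the natural coefficient ring''. The paper avoids the issue by shifting with the integer $\alpha q = p_1$ instead of $\alpha$ and simultaneously passing to $G(x)=F(x^q)$, so that everything stays in $K[x]$. Your version is fixed the same way---just apply $x\mapsto x^q$ at the end---but as written it does not quite match the definition in force.
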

\begin{proof}
	Apply the operator $x \mapsto x^{\alpha q}$ to the first line of equation \ref{defmah}. Multiply both sides by $A(x)$.
	Take the original equation and multiply it by $A(x^{\alpha q})$. These operations take polynomials to polynomials and if we let $F(x^q) = G(x)$ then
	we are left with $2$ Mahler equations for $G$ with equal right hand sides, namely, $A(x^{\alpha q})A(x)$. Take the difference between the two equations
	and we are left with a homogeneous $\alpha$-Mahler equation for $G$. Repeat the argument for the second equation to force $B(x) = 0$.
\end{proof}

\begin{lem}
	If $F$ satisfies equation \ref{defmah} then there
	is a Mahler function, $G$ where we can assume $P_0(x)  \ne 0$.
\end{lem}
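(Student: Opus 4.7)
The plan is to reduce the degree of the $\alpha$-Mahler equation step by step, replacing $F$ by an appropriate iterated monomial pullback, until the coefficient of $F(x)$ itself becomes nonzero. Concretely, if $P_0(x) \ne 0$ already then $G = F$ works and we are done, so assume $P_0 \equiv 0$. The equation reads
\begin{equation*}
\sum_{i=1}^{d} P_i(x)\,F(x^{\alpha^i}) = A(x).
\end{equation*}
Setting $G(x) := F(x^\alpha)$, we have $F(x^{\alpha^i}) = G(x^{\alpha^{i-1}})$, so the equation becomes
\begin{equation*}
\sum_{i=1}^{d} P_i(x)\,G(x^{\alpha^{i-1}}) = A(x),
\end{equation*}
which is an $\alpha$-Mahler equation for $G$ of degree $d-1$ whose ``new $P_0$'' is the old $P_1(x)$.

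If $P_1(x) \ne 0$ this $G$ satisfies the conclusion; otherwise iterate, taking $G(x) := F(x^{\alpha^2})$, and so on. Since equation \ref{defmah} is assumed nontrivial, there is a smallest index $k \le d$ with $P_k \not\equiv 0$, and after $k$ such steps the function $G = F(x^{\alpha^k})$ satisfies an $\alpha$-Mahler relation in which the coefficient of $G(x)$ is precisely $P_k$, which is nonzero by construction. The process therefore terminates after at most $d$ iterations.

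Finally, although the statement concerns only the $\alpha$-Mahler equation, we should check that this replacement is compatible with the $\beta$-Mahler hypothesis that $F$ also satisfies (this is the setting we will apply the lemma in). Applying the substitution $x \mapsto x^{\alpha^k}$ to the $\beta$-Mahler equation for $F$ and using the commutativity of real exponentiation, $\alpha^k \beta^j = \beta^j \alpha^k$, gives $F((x^{\alpha^k})^{\beta^j}) = G(x^{\beta^j})$, so we obtain a $\beta$-Mahler relation $\sum_j Q_j(x^{\alpha^k})\,G(x^{\beta^j}) = B(x^{\alpha^k})$. Since we are in the rational case, $\alpha^k \in \mathbb{Q}$, and after clearing denominators with an overall substitution $x \mapsto x^m$ the coefficients remain polynomial. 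The only real content of the argument is the telescoping substitution $G = F(x^\alpha)$; there is no serious obstacle, just bookkeeping to check that each reduction genuinely lowers the index of the lowest nonzero coefficient and preserves the Mahler structure on both sides.
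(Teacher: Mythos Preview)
Your argument is correct and is essentially identical to the paper's proof: both take the smallest index $k$ with $P_k \not\equiv 0$ and set $G(x) = F(x^{\alpha^k})$ (the paper phrases this as $F(x) = G(x^{1/\alpha^i})$, which is the same thing), so that the shifted equation has $P_k$ as the new leading coefficient. Your additional paragraph checking compatibility with the $\beta$-equation goes beyond what the paper records here, but the core reduction is the same.
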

\begin{proof}
	Suppose $P_0(x) = P_1(x) = \ldots = P_{i-1}(x) = 0$ and $P_i(x) \ne 0$. Define $G(x)$ to be the Hahn series with $F(x) = G(x^{\frac{1}{\alpha^i}})$.
	Then $G(x)$ satisfies an $\alpha$-Mahler equation with non-zero initial term.
\end{proof}

\begin{lem}{\label{inv}}
	If $F$ satisfies equation \ref{defmah} we can assume $\alpha > 1$ and $\beta > 1$.
\end{lem}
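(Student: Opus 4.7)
The plan is to treat the cases $\alpha < 1$ and $\beta < 1$ in turn, converting each into its reciprocal by a power substitution on the variable. Write $\alpha = p_1/q$ with $p_1 < q$, and let $d$ be the degree of the $\alpha$-Mahler equation. I will define
\[
G(z) := F(z^{p_1^d})
\]
and show that $G$ is simultaneously $(1/\alpha)$-Mahler and $\beta$-Mahler with polynomial coefficients, and that its rationality is equivalent to that of $F$. Repeating the construction with $\beta$ achieves the desired reduction.

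The calculation uses two different substitutions, one per equation. For the $\alpha$-equation, substitute $x = z^{q^d}$: since $q^d \alpha^i = q^{d-i} p_1^i$ is a positive integer, the coefficients $P_i(z^{q^d})$ remain polynomials in $z$ and all exponents appearing inside $F$ are integers. Factoring $q^{d-i} p_1^i = p_1^d \cdot \gamma^{d-i}$ with $\gamma := 1/\alpha$ and using the definition of $G$, each term rewrites as $G(z^{\gamma^{d-i}})$, giving a $\gamma$-Mahler equation for $G$ after reindexing. For the $\beta$-equation, instead substitute $x = z^{p_1^d}$: now $F(z^{p_1^d \beta^i}) = G(z^{\beta^i})$ directly, and the coefficients $Q_i(z^{p_1^d})$ are polynomial, yielding a $\beta$-Mahler equation for $G$. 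If $\beta < 1$ as well, apply the same construction to $G$ using its $\beta$-Mahler data to produce $H$ that is simultaneously $(1/\alpha)$- and $(1/\beta)$-Mahler.

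The phrase ``we can assume'' also demands that rationality of the new series implies rationality of the original $F$. This follows from a short Galois argument: if $G(z) = F(z^c) \in K(z)$, then matching Laurent expansions forces $G$ to have support inside $c\mathbb{Z}$, and invariance under $z \mapsto \zeta z$ for a primitive $c$-th root of unity $\zeta$ identifies $G$ with an element of $K(z^c)$; pulling back through $x = z^c$ returns $F \in K(x)$. The only real obstacle in the argument is bookkeeping --- selecting the correct substitution for each equation so that both the coefficients and the exponents inside $F$ remain in the polynomial world --- and once the substitutions are fixed, the remaining manipulations are entirely routine.
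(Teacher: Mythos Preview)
Your reduction is essentially the paper's: define $G(z)=F(z^{p_1^{\,d}})$ and obtain a $(1/\alpha)$-Mahler equation for $G$ with polynomial coefficients by substituting $x=z^{q^d}$ into the $\alpha$-equation. The paper phrases this as two successive substitutions ($x\mapsto x^{1/\alpha^d}$ followed by $x\mapsto x^{p^d}$), but it is the same computation. You are in fact more careful than the paper about the second equation: you note that a \emph{different} substitution, $x=z^{p_1^{\,d}}$, carries the $\beta$-equation directly to a $\beta$-Mahler equation for the same $G$, whereas the paper only says ``the proof for $\beta$ is the exact same'' and then adds a somewhat cryptic remark about reconciling the two resulting series by a further power.

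Your rationality-transfer paragraph, however, contains a real gap. The claim that ``matching Laurent expansions forces $G$ to have support inside $c\mathbb{Z}$'' is false for Hahn series: take $F(x)=x^{1/2}$ and $c=2$, so $G(z)=F(z^2)=z\in K(z)$; the support of $G$ is $\{1\}\not\subset 2\mathbb{Z}$, and $F\notin K(x)$. The invariance $G(\zeta z)=G(z)$ cannot be deduced from the formal identity $G(z)=F(z^c)$ here, because substituting $\zeta z$ into a Hahn series with non-integer exponents is not defined (and, computed on the Laurent side, $G(\zeta z)=\zeta z\neq z$). Note this $F$ is genuinely $3$-Mahler ($F(x^3)=xF(x)$) and $5$-Mahler ($F(x^5)=x^2F(x)$), so the obstruction is not hypothetical. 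The paper's own proof of the lemma does not attempt any rationality transfer --- it only exhibits the new functional equations --- so your instinct that something further needs to be said is correct, but the Galois argument as written does not supply it.
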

\begin{proof}
	Let $\alpha = \frac{p}{q} < 1$ written in lowest common form. Let $G(x) = F(x^{p^d})$. Apply $x \mapsto x^{\frac{1}{\alpha^d}}$ to the first equation in \ref{defmah}.
	This give a $\frac{1}{\alpha}$-Mahler equation for $F$ over the coefficient ring $K[x^{\frac{1}{\alpha^d}}]$. Taking this equation under the operator $x \mapsto x^{p^d}$
	takes $F$ to $G$ while taking $K[x^{\frac{1}{\alpha^d}}]$ to $K[x]$. Since $\frac{1}{\alpha} > 1$, this will suffice. The proof for $\beta$ is the exact same
	applied to the second. We can guarantee the two series $G$ will be equal by raising to the greatest common denominator of the numerators of $\alpha$ and $\beta$, $G(x) \mapsto G(x^a)$,
	in each case.
\end{proof}

\section{Non-integer Mahler power series}

Let $\alpha = \frac{p}{q}$ be a non-integer positive rational with $\left( p,q \right) = 1$. Require that $\alpha \ne \frac{1}{n}$ for an integers $n$.
Then we will show here that $F$ is a $\alpha$-Mahler Laurent series if and only if $F$ is rational. Notice that the backward direction is trivial.
The restriction on $\alpha$ is quite necessary. If $n$, a natural number, is multiplicatively dependent to $\frac{1}{n}$ we can construct
$\frac{1}{n^k}$-Mahler functions from $n$-Mahler functions. We give one such example and leave the rest to the reader.\\

Let $F(x) = \sum_{n=2}^{\infty} x^{2^n}$. Then $F(x)$ is $2$-Mahler and satisfies
\begin{equation}
	F(x^2) = F(x)-x^4.
\end{equation}
Normalize so that we get a homogeneous functional equation
\begin{equation}
	F(x^4) - (1+x^4)F(x^2) + F(x) = 0
\end{equation}
And applying the $x \mapsto x^{\frac{1}{2}}$ operator twice:
\begin{equation}
	F(x) - (1+x)F(x^{\frac{1}{2}}) + xF(x^{\frac{1}{4}}) = 0
\end{equation}
yields a $\frac{1}{2}$-Mahler functional equation.\\

Moving on to a proof, we say a Laurent series, $F$, in $K[[x]][x^{-1}]$ is $\alpha$-Mahler of degree $d$ if there exist polynomials $P_0(x),\ldots, P_d(x)$
in $K[x]$ so that
\begin{equation}{\label{def_1}}
	P_d(x)F(x^{{\alpha}^{d}})+P_{d-1}(x)F(x^{\alpha^{d-1}})+\ldots+P_0(x)F(x) = 0.
\end{equation}
under these conditions we must have
\begin{thm}{\label{ez}}
	$F(x)$ is in $K(x)$. That is, $F$ is rational.
\end{thm}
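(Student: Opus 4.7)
The plan is to show $F$ is rational by a sequence of structural reductions of its support, ultimately invoking the Adamczewski--Bell theorem on integer Mahler functions.

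First, by Lemma~\ref{inv} we may assume $\alpha > 1$, and we are given $p, q \ge 2$ with $\gcd(p, q) = 1$. A valuation argument forces $F$ to be a power series rather than a proper Laurent series. Indeed if $v(F) = m_0 < 0$, then since $\alpha > 1$ and $m_0 < 0$ we have $m_0 \alpha^d < m_0 \alpha^i$ for all $i < d$, so the term $P_d(x) F(x^{\alpha^d})$ uniquely attains the minimum valuation $m_0 \alpha^d + v(P_d)$ in the Mahler sum, and its leading coefficient $f_{m_0} \cdot (P_d)_{v(P_d)}$ is nonzero. This contradicts the equation summing to zero, so $m_0 \ge 0$.

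Second, I would run a Puiseux denominator argument. Viewed in $K[[x^{1/q^d}]]$, exponents whose denominator in lowest form equals $q^d$ can only arise from $P_d(x) F(x^{\alpha^d})$, and only from monomials $f_n x^{np^d/q^d}$ with $\gcd(n, q) = 1$, since the exponents of $P_i(x) F(x^{\alpha^i})$ have denominator dividing $q^i / \gcd(n, q^i)$. Setting those coefficients to zero produces a linear recurrence on the subsequence $\{f_n\}_{\gcd(n, q) = 1}$. Combined with the boundary condition $f_n = 0$ for $n < 0$ from step one, forward induction forces $f_n = 0$ for all $n \ge 0$ coprime to $q$, so $F$ is supported on $q\mathbb{Z}_{\ge 0}$.

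Third, iterate. Writing $F(x) = H(x^q)$ turns the Mahler equation into $\sum_i P_i(x) H(x^{q\alpha^i}) = 0$, to which the same denominator argument applies at level $q^{d-1}$, forcing $h_n = 0$ for $\gcd(n, q) = 1$. Repeating the procedure $d$ times shows $F$ is supported on $q^d \mathbb{Z}_{\ge 0}$.

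Finally, write $F(x) = J(x^{q^d})$. Then $J \in K[[x]]$ satisfies the integer-exponent equation
\[
\sum_{i=0}^d P_i(x)\, J(x^{q^{d-i} p^i}) = 0,
\]
whose substitution exponents form a geometric progression $q^d, q^{d-1}p, \ldots, p^d$ with ratio $\alpha$. The Puiseux technique now yields nothing new, but by applying the substitutions $x \mapsto x^{p^d}$ and $x \mapsto x^{q^d}$ repeatedly and eliminating the mixed-base terms $J(x^{q^a p^b})$ with $a, b > 0$, one can derive that $J$ is both $p^d$-Mahler and $q^d$-Mahler in the standard sense. Since $\gcd(p, q) = 1$ and $p, q \ge 2$ imply that $p^d$ and $q^d$ are multiplicatively independent integers, the Adamczewski--Bell theorem then forces $J$ to be rational, and hence so is $F(x) = J(x^{q^d})$.

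The main obstacle is the final step: verifying that the elimination really produces standard $p^d$- and $q^d$-Mahler equations, and that no leading coefficient vanishes in a way that obstructs the reduction. The earlier steps rely only on divisibility in $\mathbb{Q}$ and are quite uniform, but the passage from the mixed-base functional equation for $J$ to two single-base (integer) Mahler equations is delicate and may require side arguments on the polynomials $P_i$; an illustrative warning is the example $F(x) = x^2$, which is $\tfrac{3}{2}$-Mahler via $F(x^{3/2}) - x F(x) = 0$ and is rational but not constant, so the proof must accommodate nontrivial polynomial solutions rather than forcing $F$ to collapse to a constant.
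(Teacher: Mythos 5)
Your reduction of the support to $q^d\mathbb{Z}$ (your second and third steps) is essentially the paper's first lemma, proved the same way: exponents whose denominator is exactly $q^d$ can only arise from the $P_d(x)F(x^{\alpha^d})$ term, so the corresponding coefficients vanish, and one iterates. But both ends of your argument have real problems. The opening valuation claim --- that $P_d(x)F(x^{\alpha^d})$ \emph{uniquely} attains the minimal valuation when $v(F)<0$ --- is false, because you compare only the contributions $m_0\alpha^i$ and ignore the $v(P_i)$: the series $F(x)=x^{-2}$ satisfies $xF(x^{3/2})-F(x)=0$, a perfectly good $\tfrac{3}{2}$-Mahler equation with $v(F)=-2$, and the minimal exponents of the two summands both equal $-2$. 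So you cannot force $F$ into $K[[x]]$, and any correct argument must tolerate rational solutions of arbitrary valuation.

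The decisive gap is your last step. After writing $F=J(x^{q^d})$ you hold a \emph{single} mixed-base functional equation $\sum_i P_i(x)J(x^{q^{d-i}p^i})=0$, and you assert that elimination under $x\mapsto x^{p^d}$ and $x\mapsto x^{q^d}$ yields honest $p^d$- and $q^d$-Mahler equations for $J$. That elimination is exactly the content of the paper's Lemma~\ref{combo}, but that lemma requires \emph{two} independent Mahler equations to close the span of the $J(x^{q^ap^b})$ into a finite-dimensional module over the coefficient ring; with only one equation, each substitution introduces fresh mixed-base terms for which you have no second relation, the dimension count never terminates, and Adamczewski--Bell cannot be invoked. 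The paper finishes quite differently: after the support reduction it examines the minimal exponent of each summand $Q_i(x)G(x^{\alpha^i})$, namely $\alpha^i v(G)+v(Q_i)$, notes that two of these must coincide for the sum to vanish, and derives $(\alpha^n-\alpha^m)v(G)\in\mathbb{Z}$, which it then rules out for non-integer rational $\alpha$ after normalizing the valuation. Your closing example $F(x)=x^2$ with $F(x^{3/2})=xF(x)$ is a fair warning for both arguments, since there the minimal exponents do collide; but it shows your final step is not merely ``delicate'' --- as proposed it does not go through, and the paper's own route does not pass through Adamczewski--Bell at all.
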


The approach is to first reduce to the degree $1$ case.
From there we argue using an infinite product formulation of the problem.

\begin{lem}
	We must have $F(x) = G(x^{q^d})$ for some Laurent series $G(x)$.
\end{lem}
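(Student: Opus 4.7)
I would first change variables, letting $u = x^{1/q^d}$, which rewrites the functional equation \ref{def_1} as
\begin{equation*}
	\sum_{i=0}^{d} P_i\bigl(u^{q^d}\bigr)\, F\bigl(u^{p^i q^{d-i}}\bigr) \;=\; 0
\end{equation*}
inside $K[[u]][u^{-1}]$; all exponents of $u$ that appear are integers because $0 \le i \le d$. Writing $v_q$ for the $q$-adic valuation on $\mathbb{Z}$, the lemma amounts to showing $v_q(n) \ge d$ for every $n \in P(F)$, since then $F(x) = \sum_m a_{m q^d}\, x^{m q^d} = G(x^{q^d})$ for the evident Laurent series $G$.

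Suppose instead, for contradiction, that $j := \min\{v_q(n) : n \in P(F)\}$ is strictly less than $d$, and set $N := \{n \in P(F) : v_q(n) = j\}$, which is nonempty. Expand each summand as
\begin{equation*}
	P_i(u^{q^d})\, F(u^{p^i q^{d-i}}) \;=\; \sum_{k,n} c_{i,k}\, a_n\, u^{k q^d + n p^i q^{d-i}},
\end{equation*}
where the $c_{i,k}$ are the coefficients of $P_i$. Since $\gcd(p,q) = 1$, the two additive pieces of the exponent satisfy $v_q(k q^d) \ge d$ and $v_q(n p^i q^{d-i}) = v_q(n) + (d-i)$. A monomial whose exponent has $q$-valuation exactly $j$ therefore requires $v_q(n) + (d-i) = j$; for $i < d$ this forces $v_q(n) < j$, contradicting the minimality of $j$, so only the $i = d$ summand can contribute at valuation $j$, and only through $n \in N$.

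Projecting the equation onto $u$-exponents of $q$-valuation exactly $j$ therefore leaves
\begin{equation*}
	P_d(u^{q^d}) \cdot \sum_{n \in N} a_n\, u^{n p^d} \;=\; 0,
\end{equation*}
and crucially every monomial of this product already sits at an exponent of $q$-valuation exactly $j$, because $v_q(k q^d) \ge d > j = v_q(n p^d)$ for $n \in N$, so the two pieces of the exponent combine without any cancellation of valuation. Because $P_d \ne 0$ (the equation has degree $d$) and $K[[u]][u^{-1}]$ is an integral domain, the subseries $\sum_{n \in N} a_n\, u^{n p^d}$ must vanish, giving $a_n = 0$ for every $n \in N$ and contradicting the nonemptiness of $N$. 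The only real difficulty is the valuation bookkeeping required to isolate the $i = d$ summand; once that projection is justified, the conclusion is immediate from $P_d \ne 0$ and the coprimality of $p$ and $q$.
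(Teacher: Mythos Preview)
Your proof is correct and rests on the same core idea as the paper's: isolate the $i=d$ summand by looking at exponents with the least $q$-divisibility, then use $P_d\ne 0$ and the integral-domain property to kill the offending coefficients. The only difference is cosmetic---the paper works with fractional exponents in $x$ and peels off one factor of $q$ at a time by induction (first getting $F(x)=F_0(x^q)$, then repeating), whereas you substitute $u=x^{1/q^d}$ to make all exponents integral and then run the whole argument in a single step via the $q$-adic valuation; your packaging is arguably cleaner, but the mechanism is identical.
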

\begin{proof}
	Consider coefficients of terms with exponents of form $a + k\left(\frac{p}{q}\right)^d$ in $\ref{def_1}$ for $a,k \in \mathbb{N}$ with $q \nmid k$. They have to be $0$
	from the right hand side and on the left hand side they only come from the $P_d(x)F(x^{\alpha ^{d}})$ term. Thus $F(x) = F_0(x^q)$ for some Laurent series $F_0$.
	Now substitute $F_0(x^q)$ for $F(x)$ in \ref{def_1} and repeat the argument mutatis mutandis.
\end{proof}

\begin{cor}
	$F(x^{\alpha ^{i}})$ are Laurent series as well for $i = 0 ,1 \ldots, d$.
\end{cor}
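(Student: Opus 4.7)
The plan is to deduce this corollary directly from the previous lemma, which says $F(x) = G(x^{q^d})$ for some Laurent series $G$. Substituting into $F(x^{\alpha^i})$, I would compute
\begin{equation}
F(x^{\alpha^i}) = G\!\left(x^{q^d \alpha^i}\right) = G\!\left(x^{q^d (p/q)^i}\right) = G\!\left(x^{p^i q^{d-i}}\right).
\end{equation}
Since $0 \le i \le d$ and $p, q$ are positive integers with $(p,q) = 1$, the exponent $p^i q^{d-i}$ is a positive integer. Therefore $F(x^{\alpha^i})$ is obtained from the Laurent series $G$ by the substitution $x \mapsto x^{p^i q^{d-i}}$, and substituting a positive integer power of $x$ into a Laurent series produces a Laurent series.

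There really is no obstacle here; the content is entirely in the previous lemma. The only thing worth being slightly careful about is checking that the arithmetic of exponents goes through without any fractional powers appearing, which is precisely why the factor $q^d$ (rather than a smaller power of $q$) was extracted in the lemma: it is exactly enough to clear the denominators of all the $\alpha^i$ for $i \le d$ simultaneously.
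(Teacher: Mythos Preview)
Your proposal is correct and is exactly the intended deduction: the paper states this as an immediate corollary of the lemma $F(x)=G(x^{q^d})$ without further proof, and your computation $F(x^{\alpha^i})=G(x^{p^i q^{d-i}})$ with $p^i q^{d-i}\in\mathbb{Z}_{>0}$ for $0\le i\le d$ is precisely the one-line verification implicit in that.
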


We use $\mathbb{Z} + \gamma$ to denote the set $\{z + \gamma\}$ for $z \in \mathbb{Z}$.

\begin{lem}
	Let $F(x)$ be a Laurent series and let $S(x),T(x)$ be Hahn series. Let
	\[
		S(x) = T(x)F(x)
	.\] If $_{\mathbb{Z}+\gamma}\left[ S(x) \right]$ denotes the sum of terms composing $S(x)$ with exponents in $\mathbb{Z}+\gamma$ then
	\[
		_{\mathbb{Z} +\gamma}\left[S(x)  \right] = _{\mathbb{Z}+\gamma}\left[ T(x) \right] F(x)
	.\]
\end{lem}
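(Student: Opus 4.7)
The plan is to chase coefficients on both sides of $S(x) = T(x)F(x)$ using the definition of the Hahn series product, exploiting the single fact that a Laurent series has support contained in $\mathbb{Z}$. Since multiplying by $F$ only shifts exponents of $T$ by integers, and $\mathbb{Z}+\gamma$ is a coset of the subgroup $\mathbb{Z} \subseteq \Gamma$, the coset structure of the support is preserved by the operation, which is morally why the identity holds.

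Concretely, I would write $T(x) = \sum_{j \in \Gamma} t_j x^j$ and $F(x) = \sum_{k \in \mathbb{Z}} f_k x^k$, with $P(F) \subseteq \mathbb{Z}$ since $F$ is Laurent. The Cauchy product expands as
\[
S(x) = \sum_{i \in \Gamma} \left( \sum_{j + k = i,\ k \in \mathbb{Z}} t_j f_k \right) x^i.
\]
The key observation is then the equivalence $j + k \in \mathbb{Z}+\gamma \iff j \in \mathbb{Z}+\gamma$, valid precisely because $k \in \mathbb{Z}$. Consequently, restricting the outer sum to $i \in \mathbb{Z}+\gamma$ amounts to restricting $j$ to $\mathbb{Z}+\gamma$ in the inner sum, and after rearrangement this is exactly ${}_{\mathbb{Z}+\gamma}[T(x)] \cdot F(x)$, which is the claim.

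The only potential subtlety is ensuring the rearrangement is legitimate in the Hahn framework, but this is routine: $P(T)$ and $P(F) \subseteq \mathbb{Z}$ are both well-ordered, so each coefficient of $S$ is a finite sum, and the subset $P(T) \cap (\mathbb{Z}+\gamma)$ remains well-ordered, making ${}_{\mathbb{Z}+\gamma}[T(x)]$ itself a legitimate Hahn series whose product with $F$ is defined. Thus no deeper machinery is required beyond the coset observation, and the "main obstacle" is really only bookkeeping; the content of the lemma is that $\mathbb{Z}$-translation preserves each coset $\mathbb{Z}+\gamma$.
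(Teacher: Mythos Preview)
Your proof is correct and follows essentially the same approach as the paper: both rest on the single observation that, because $P(F)\subset\mathbb{Z}$, multiplication by $F$ shifts exponents of $T$ only within the coset $\mathbb{Z}+\gamma$. The paper states this in one line, while you have spelled out the Cauchy product and the well-orderedness check more explicitly, but there is no substantive difference.
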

\begin{proof}
	Since $F(x)$ is a Laurent series, the exponents of terms in $F(x)$ are in $\mathbb{Z}$. So each term of form $x^\gamma$ maps to a sum of terms of form $x^{z +\gamma}$ after
	multiplication by $F(x)$.
\end{proof}

We will later use the more abstract version of this notation. If $G$ is a Hahn series we will take ${}_A[G]$ to mean the Hahn series containing only terms with powers contained in $A$. An immediate consequence of this definition is $P({}_A[G]) = A\cap P(G)$.\\

We are ready to prove the central result of this section and let $\alpha >1$.
Write $F(x) = x^{v(F) - 1}G(x)$, then $G(x^l)$ is also $\alpha$-Mahler for some $l$
that divides $v(Q_i)$ for polynomials in the functional equation defining
If we know
\begin{equation}
	Q_d(x)G(x^{{\alpha}^{d}})+Q_{d-1}(x)G(x^{\alpha^{d-1}})+\ldots+Q_0(x)G(x) = 0
\end{equation}
then we know the powers of $x$
across the left hand side must appear in $2$ of the $d$ terms
and cancel to $0$. In particular, the minimal terms in each of the $d$ summands
must satisfy this constraint so we know
\[
\alpha^n l + b_0 = \alpha^m l + b_1
\]
for some integers $b_0$ and $b_1$ and naturals $n$ and $m$.
If $\alpha$ is a rational that is not an integer then
\[
\alpha^n - \alpha^m \not\in \mathbb{Z}
\]
while $\frac{b_1-b_0}{l}$ is. Contradiction. \qed

\section{Reduction to linear subsets}

Let $S$ be a well ordered subset of $\mathbb{R}$ under $<$.
Consider the formal Hahn series $F \in K^S[[x]]$.
Suppose $F$ is a homogeneous $\alpha$-Mahler function as well, with $\alpha \in \mathbb{R}$ for some $\alpha > 1 \in \mathbb{R}$.
Again, by Lemma \ref{inv}, $\alpha > 1$ is assumed to be the case.\\

Let $P_d(x) \ne 0$ and assume $F$ satisfies
\begin{equation}\label{mah}
	\sum_{i=0}^{d} P_i(x)F(x^{{\alpha}^i}) = 0
\end{equation}

where $F(x) = \sum_{s\in S} f_s x^s$.
We can assume $f_s$ is never $0$ by taking a subset of $S$ if need be.
$S$ must be countable, and use the well-ordering principle to index $S = \{s_0, s_1,\ldots \}$.\\

The point of this section is to establish this theorem.
\begin{thm}\label{decomp}
	Let $F$ be an $\alpha$-Mahler Hahn series. Then,
	using the notation instituted earlier, we can rewrite $F$ as a sum of
	other $\alpha$-Mahler functions corresponding to each equivalence class

	\begin{equation}
		F(x) = \sum_{s \in \widehat{P(F)}} {}_{T(s)}[F(x)]
	\end{equation}
	where $T(s) := \alpha^\mathbb{Z} s+\mathbb{Z}[\alpha][\alpha^{-1}]$.

\end{thm}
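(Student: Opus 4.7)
The plan is to show that the equivalence relation on $P(F)$ defined by $s \sim t$ iff $T(s) = T(t)$ partitions the support of $F$ into pieces, each of which individually satisfies the same $\alpha$-Mahler functional equation. First I would check that $\sim$ is indeed an equivalence relation: reflexivity is trivial, and symmetry and transitivity follow because $\mathbb{Z}[\alpha][\alpha^{-1}]$ is a ring closed under multiplication by $\alpha^{\pm 1}$. With the equivalence classes in hand, take $\widehat{P(F)}$ to be any choice of representatives; the formal decomposition $F(x) = \sum_{s \in \widehat{P(F)}} {}_{T(s)}[F(x)]$ then follows because the sets $T(s)$ are disjoint across distinct representatives and their union covers $P(F)$, and each $F_s := {}_{T(s)}[F]$ is a well-defined Hahn series since its support is a subset of the well-ordered set $P(F)$.

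The key algebraic point is that $T(s)$ is invariant under the two operations appearing in a Mahler functional equation. For the Mahler substitution $x \mapsto x^{\alpha^j}$: if $v = \alpha^n s + p(\alpha) \in T(s)$ with $p \in \mathbb{Z}[\alpha,\alpha^{-1}]$, then $\alpha^j v = \alpha^{n+j} s + \alpha^j p(\alpha)$, and $\alpha^j p(\alpha) \in \mathbb{Z}[\alpha,\alpha^{-1}]$, so $\alpha^j v \in T(s)$. For multiplication by a polynomial $P_i(x) \in K[x]$: the support of $P_i$ lies in $\mathbb{Z}_{\geq 0} \subset \mathbb{Z}[\alpha,\alpha^{-1}]$, so translating any exponent in $T(s)$ by a non-negative integer keeps it in $T(s)$. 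Arguing as in the Laurent series lemma earlier in the paper, these two invariances combine to give the identity ${}_{T(s)}[P_i(x)\,F(x^{\alpha^i})] = P_i(x)\, F_s(x^{\alpha^i})$.

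Applying the projection ${}_{T(s)}[\cdot]$ to the functional equation \ref{mah} and using this identity term-by-term yields
\begin{equation}
\sum_{i=0}^d P_i(x)\, F_s(x^{\alpha^i}) = 0,
\end{equation}
so each $F_s$ satisfies the same $\alpha$-Mahler equation as $F$. Summing over $s \in \widehat{P(F)}$ recovers $F$, completing the proof. The main subtlety to watch for is confirming that the projection operator truly commutes with both the Mahler substitution and polynomial multiplication at the level of Hahn-series arithmetic; but this reduces, once $T(s)$ is known to be closed under $v \mapsto \alpha^j v$ and $v \mapsto v + k$ for $k \in \mathbb{Z}_{\geq 0}$, to tracking which exponents of the product land in $T(s)$, essentially the same bookkeeping as in the Laurent case already handled.
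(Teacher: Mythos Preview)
Your proof is correct and follows essentially the same approach as the paper: both identify the equivalence classes $T(s)=\alpha^{\mathbb{Z}}s+\mathbb{Z}[\alpha,\alpha^{-1}]$, observe their closure under $v\mapsto\alpha v$ and integer translation, and use this to show that the projection ${}_{T(s)}[\cdot]$ passes through the Mahler functional equation so each ${}_{T(s)}[F]$ satisfies the same relation as $F$. The paper additionally motivates the definition of $T(s)$ by first rewriting the equation with rational coefficients $R_i$ and deriving the inclusion $P(F)\subset\bigcup_{n\ge 1}\alpha^n P(F)+\mathbb{Z}[\alpha,\alpha^{-1}]$, but this is expository rather than a structural difference.
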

We use this theorem later on in the proof of our main theorem to reduce to the case where the
set of powers of $x$ is a subset of $\mathbb{Z}[\alpha][\alpha^{-1}]$.\\

\begin{proof}
	For now assume $F$ satisfies equation \label{mah} and write $F$ as
	\begin{equation}{\label{rat}}
		F(x) = \sum_{i=1}^{d}R_i(x)F(x^{\alpha^{i}})
	\end{equation}
	for some rational functions $R_i(x)$. Notice the exponents of terms of the right hand side all of form
	\begin{equation}
		\alpha^i s + c
	\end{equation}
	where $c \in \mathbb{Z}$ and $s \in P(F)$. Every exponent occurring on the left must also occur on the right. In this case the left is
	$P(F)$ while the exponents occurring on the right will be a union of the sets $\alpha^i P(F) + c$ as $i$ and $c$ vary. Then
	\begin{equation}{\label{subset}}
		P(F) \subset \bigcup_{n=1}^{\infty}\alpha^n P(F) + \mathbb{Z}[\alpha][\alpha^{-1}]
	\end{equation}
	where $A+B$ is the set $a+b$ over all $a \in A$ and $b \in B$. Let $x$ and $y$ be elements of $\mathbb{R}$ and we will define the equivalence relation $\sim$
	to mean there is some $p \in \mathbb{Z}[\alpha][\alpha^{-1}]$ and integer $m$ for which
	\begin{equation}
		\alpha^m x + p = y.
	\end{equation}
	The reader can check that this is indeed an equivalence relation.
	The upshot of this equivalence is that we can pass it through equation \ref{subset}.
	Let $\widehat{P(F)} = P(F) / \sim $ so that we now have
	\begin{equation}
		P(F) \subset \bigcup_{n=1}\alpha^n\widehat{P(F)} + \mathbb{Z}[\alpha][\alpha^{-1}].
	\end{equation}
	Notice that each equivalence class $\alpha^\mathbb{Z}s + \mathbb{Z}[\alpha][\alpha^{-1}]$ is closed
	under multiplication by $\alpha$ and addition of integers.
	Therefore, we can rewrite $F$ as a sum over a $\alpha$-Mahler function corresponding to each equivalence class
	\begin{equation}
		F(x) = \sum_{s \in \widehat{P(F)}} {}_{T(s)}[F(x)]
	\end{equation}
	where $T(s) := \alpha^\mathbb{Z} s+\mathbb{Z}[\alpha][\alpha^{-1}]$.
	In which case ${}_{T(s)}[F]$ is $\alpha$-Mahler with the same polynomial scalars, as required.
	Note that each term in the sum is $\alpha$-Mahler with the same functional equation.\\
\end{proof}

	We shall soon see that we can sharpen the restriction $P(F) \subset \mathbb{Z}[\alpha][\alpha^{-1}] +\alpha^\mathbb{Z}s$ further still, under some assumptions.
	We will however, wait and
	do this spread out over a few cases.
	For now, we point out another restriction, namely

\begin{lem}{\label{finiteness}}
	Let $F$ be an $\alpha$-Mahler Hahn series.
	$P(F) \cap T(s) = \emptyset$ for all but finitely many $s \in \mathbb{R}$. In other words, $\widehat{P(F)}$ is a finite set.
\end{lem}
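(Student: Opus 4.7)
My plan is to apply Theorem \ref{decomp} and then run a valuation argument on the minimum element of each equivalence class. By Theorem \ref{decomp}, for each class in $\widehat{P(F)}$, represented by some $s$, the restriction ${}_{T(s)}[F]$ is itself a nonzero $\alpha$-Mahler Hahn series satisfying the very same functional equation
\[
\sum_{i=0}^{d} P_i(x)\, {}_{T(s)}[F](x^{\alpha^i}) = 0.
\]
Because $P(F)$ is well-ordered, so is the nonempty subset $T(s) \cap P(F)$, and it therefore has a minimum $m(s) := v({}_{T(s)}[F])$.

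The heart of the argument is to examine the valuations of the summands in this restricted equation. For each index $i$ with $P_i \ne 0$, the $i$-th summand has valuation $v(P_i) + \alpha^i m(s)$, and its leading coefficient is the nonzero product of the leading coefficient of $P_i$ with that of ${}_{T(s)}[F]$ (using that $K[[x^\Gamma]]$ is a field). If the minimum of these valuations were achieved by a single index, its leading coefficient would survive and the sum could not vanish. Consequently there must exist two distinct indices $i, j \in \{0, 1, \ldots, d\}$ with $P_i, P_j \ne 0$ satisfying
\[
v(P_i) + \alpha^i m(s) = v(P_j) + \alpha^j m(s).
\]
Since $\alpha > 1$ forces $\alpha^i \ne \alpha^j$, this equation pins down $m(s) = (v(P_j) - v(P_i))/(\alpha^i - \alpha^j)$.

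As $(i,j)$ ranges over pairs with $0 \le i < j \le d$ and $P_i, P_j \ne 0$, the right-hand side takes only finitely many real values, so $m(s)$ belongs to a fixed finite subset of $\mathbb{R}$. Distinct equivalence classes are disjoint subsets of $\mathbb{R}$, so their intersections with $P(F)$ are disjoint and the minima $m(s)$ for distinct classes are distinct real numbers. The map $[s] \mapsto m(s)$ is therefore injective with finite image, which forces $\widehat{P(F)}$ to be finite. The one subtle point I anticipate is the degenerate situation in which only a single $P_i$ contributes to the restricted equation; this case is benign because the equation then collapses to $P_i(x)\,{}_{T(s)}[F](x^{\alpha^i}) = 0$, which forces ${}_{T(s)}[F] = 0$ and hence the class is unpopulated, so it does not enter the count.
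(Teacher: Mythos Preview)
Your proof is correct and follows essentially the same valuation argument as the paper: both observe that the minimal exponent in each (restricted) Mahler equation must be attained by at least two summands, forcing $v(P_i)+\alpha^i m = v(P_j)+\alpha^j m$ for some $i\ne j$, and hence $m$ lies in a fixed finite set. Your version is in fact more explicit than the paper's---you spell out the application of Theorem~\ref{decomp} to each ${}_{T(s)}[F]$, the injectivity of $[s]\mapsto m(s)$ via disjointness of classes, and the degenerate single-$P_i$ case---whereas the paper states the argument for $v(F)$ and leaves these steps implicit.
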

\begin{proof}
	Suppose $F$ satisfies
	\begin{equation}
		\sum_{i=0}^d P_i(x) F(x^{\alpha^i}) = 0.
	\end{equation}
	Consider the valuation of each term in the sum, $v(P_i(x)F(x^{\alpha^i})) = c_i + \alpha^i f_0$ where $v(P_i) = c_i$ and $v(F) = f_0$.
	For the right hand side to be $0$, terms on the left hand side must all cancel out. In particular, we must have solutions to
	\begin{equation}
		c_j + \alpha^j f_0 = c_i +\alpha^i f_0
	\end{equation}
	for some $i \ne j \in \{0, \ldots, d\}$.
	Here $i,j$ are allowed to vary while the $c_i, c_j$ and $\alpha$ are fixed by the functional equation. Of course, each equation is linear so there are only
	finitely many $f_0$ that are possible as $i$ and $j$ vary. From this we conclude $T(s) \cap P(F) = \emptyset$ for all but finitely many $s \in \mathbb{R}$.
\end{proof}

	We end this section we a small observation of what occurs in the case where one of the $_{T(s)}[F]$ is rational, which will be useful to us.
\begin{lem}{\label{cover}}
	Suppose $F(x)$ is an $\alpha$-Mahler Hahn series with $\alpha$ rational.
	There is some integer $l$ for which $P(F(x^l)) \subset \mathbb{Z}[\alpha, \alpha^{-1}]$.
	Trivially, $F(x^l)$ is an $\alpha$-Mahler Hahn series as well.
\end{lem}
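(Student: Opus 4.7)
The strategy is to use the decomposition furnished by Theorem \ref{decomp} together with Lemma \ref{finiteness}: write $F = \sum_{s \in \widehat{P(F)}} {}_{T(s)}[F]$ as a finite sum of $\alpha$-Mahler series, each supported in $T(s) = \alpha^{\mathbb{Z}} s + \mathbb{Z}[\alpha, \alpha^{-1}]$. Since $\alpha = p/q$ is rational, taken in lowest terms, $\mathbb{Z}[\alpha, \alpha^{-1}]$ equals $\mathbb{Z}[1/(pq)]$ and in particular sits inside $\mathbb{Q}$. To force $l \cdot P(F) \subseteq \mathbb{Z}[\alpha, \alpha^{-1}]$ it therefore suffices to choose a single integer $l$ with $l \cdot s \in \mathbb{Z}[\alpha, \alpha^{-1}]$ for each of the finitely many class representatives $s$, since then $l(\alpha^n s + r) = \alpha^n(ls) + lr$ lies in $\mathbb{Z}[\alpha, \alpha^{-1}]$ automatically, and this is exactly the condition we need over the union of all $T(s)$.

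The substance of the lemma is therefore to pin down each representative $s$ as a rational number, so that such an $l$ exists. This is the role of the remark immediately preceding the lemma: whenever ${}_{T(s)}[F]$ reduces to a rational function, its support is contained in $\mathbb{Z}$, so $T(s) \cap \mathbb{Z}$ is nonempty, and any integer $m = \alpha^n s + r$ with $r \in \mathbb{Z}[\alpha, \alpha^{-1}]$ forces $s = \alpha^{-n}(m - r) \in \mathbb{Z}[\alpha, \alpha^{-1}] \subset \mathbb{Q}$. In the intended application this reduction has been arranged for every equivalence class, so each representative $s$ is of the form $a_s/b_s$ in lowest terms, and we may take $l$ to be the common multiple of the $b_s$, padded by a sufficient power of $pq$ to absorb any residual denominators coming from $\mathbb{Z}[1/(pq)]$ itself.

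The concluding remark that $F(x^l)$ is again $\alpha$-Mahler is the routine bookkeeping observation that substituting $x \mapsto x^l$ in the defining functional equation replaces each polynomial coefficient $P_i(x)$ by $P_i(x^l)$, leaving the relation intact. The main obstacle is really identifying the implicit hypothesis that makes the statement meaningful: for a genuinely irrational $s$, no integer multiple lands inside $\mathbb{Z}[\alpha, \alpha^{-1}] \subset \mathbb{Q}$, so without the rationality reduction inherited from the preceding remark the conclusion would fail outright. Modulo that observation, the argument is straightforward.
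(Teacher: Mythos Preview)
Your overall strategy matches the paper's: decompose $F$ via Theorem~\ref{decomp}, use Lemma~\ref{finiteness} to get finitely many classes $T(s)$, argue that each representative $s$ must be rational, and then clear denominators with a common multiple $l$. The issue is in the step where you establish that $s$ is rational.

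You attribute this to ``the remark immediately preceding the lemma'' and to the hypothesis that each ${}_{T(s)}[F]$ is a rational function. But that sentence in the paper is merely an introductory line for Lemma~\ref{cover} itself; it is not a separate result, and the lemma carries no hypothesis that the pieces ${}_{T(s)}[F]$ are rational functions. In the intended application (the proof of Theorem~\ref{main}) we certainly do not know in advance that the pieces are rational --- that is the eventual conclusion, not an input. So your argument, as written, rests on an assumption that is neither stated nor available.

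The paper closes this gap by a forward reference to Lemma~\ref{decomp1}: if $s$ is irrational, then one shows directly that ${}_{T(s)}[F]=0$, by exploiting the fact that $\alpha^{m}s+p_1=\alpha^{n}s+p_2$ with $p_i\in\mathbb{Z}[\alpha,\alpha^{-1}]$ forces $m=n$, and then running a descent that produces a convergent (hence not well-ordered) sequence inside $P(F)$. Once irrational classes are eliminated, only finitely many rational $s$ remain and your LCM step goes through. Incidentally, the extra ``padding by a power of $pq$'' you mention is unnecessary: if $ls\in\mathbb{Z}[\alpha,\alpha^{-1}]$ then $l(\alpha^{n}s+r)=\alpha^{n}(ls)+lr$ already lies in $\mathbb{Z}[\alpha,\alpha^{-1}]$.
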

\begin{proof}
	Let $s \in \mathbb{R}$ be arbitrary. There are only finitely many $s$ for which $P(F) \cap T(s) \ne \emptyset$ by Lemma \ref{finiteness}.
	Note that $\sim$ is an equivalence on all of $\mathbb{R}$, so any element of $P(F)$ is contained in a member of it. That is, the union
	of all $T(s)$ cover $P(F)$.
	By Lemma \ref{decomp1} we may further assume $s \in \mathbb{Q}$. We have finitely many rationals $s$ for which the intersection with the support is non-trivial, let
	$l$ be a multiple of the least common multiple of all such $s$. Then $l\times T(s) \subset \mathbb{Z}[\alpha, \alpha^{-1}]$. From this
	it follows $P(F(x^{l})) \subset \mathbb{Z}[\alpha, \alpha^{-1}]$.
\end{proof}

\section{Doubly rational Mahler Hahn series}
As before let $F(x) \in K^{\mathbb{R}}[[x]]$ be a Hahn series over $\mathbb{R}$. Let $\alpha = \frac{p_1}{q}$ and $\beta = \frac{p_2}{q}$ be multiplicatively independent
rationals (although not necessarily written in lowest common form).
In this section we aim to prove theorem \ref{main}
\begin{thm*}
	If $F(x)$ is both $\alpha$ and $\beta$-Mahler then $F(x)$ is rational.
\end{thm*}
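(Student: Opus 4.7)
The plan is to combine all the reductions developed in the previous sections to isolate a Laurent-series sub-problem, then invoke the appropriate earlier theorem. First, I apply the reductions of Section 3 to assume that both the $\alpha$- and $\beta$-Mahler equations for $F$ are homogeneous with nonzero constant-order polynomial coefficient, and that $\alpha,\beta>1$. Next, I apply Lemma \ref{cover} twice: first to the $\alpha$-Mahler equation, producing an integer $l_1$ with $P(F(x^{l_1}))\subset\mathbb{Z}[\alpha,\alpha^{-1}]$; then to the $\beta$-Mahler equation for $F(x^{l_1})$, which remains $\beta$-Mahler, producing a further integer $l_2$. Since integer rescaling maps $\mathbb{Z}[\alpha,\alpha^{-1}]$ into itself, I end up with $P(F(x^{l_1 l_2}))\subset \mathbb{Z}[\alpha,\alpha^{-1}]\cap\mathbb{Z}[\beta,\beta^{-1}]$, and after replacing $F$ by $F(x^{l_1 l_2})$ I may assume this inclusion.

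With the reductions in place, I split into two cases. When $\alpha,\beta\in\mathbb{Z}$, the intersection above is $\mathbb{Z}[1/\gcd(\alpha,\beta)]$, and I would argue --- using Lemma \ref{finiteness} together with the integer Mahler equations (which cycle denominators in a controlled way) --- that one more rescaling $x\mapsto x^N$ puts $P(F)$ inside $\mathbb{Z}$. At that point $F$ is a Laurent series, which after multiplication by a suitable monomial becomes a power series satisfying both Mahler equations (a transformation that preserves the $\alpha$- and $\beta$-Mahler property when the moduli are integers); the Adamczewski--Bell theorem then forces $F\in K(x)$. The second case, at least one of $\alpha,\beta$ a non-integer rational, is the main event. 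Here, the plan is to reduce the support of $F$ to $\mathbb{Z}$ using Theorem \ref{decomp} and Lemma \ref{finiteness}, and then invoke Theorem \ref{ez} on the resulting Laurent series to conclude rationality.

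The main obstacle is the reduction to a Laurent series in the second case. The challenge is converting multiplicative independence of $\alpha,\beta$ into an upper bound on the denominators appearing in $P(F)$: any coincidence of the form $\alpha^m s + r_1 = \beta^n t + r_2$, with $r_i\in\mathbb{Z}[\alpha,\alpha^{-1}]\cap\mathbb{Z}[\beta,\beta^{-1}]$, would force a diophantine relation between powers of $\alpha$ and $\beta$ that, by multiplicative independence, can hold only for $m$ and $n$ in a bounded range. Combined with the finitely-many-equivalence-classes statement of Lemma \ref{finiteness}, this should bound the denominators in $P(F)$ and permit a final rescaling to make $F$ a Laurent series. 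This diophantine step is the heart of any Cobham-type statement, and I expect it to be the most technical piece of the argument.
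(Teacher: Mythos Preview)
Your reductions in the first paragraph mirror the paper's, and the split into cases is correct. The genuine gap is in your second case, where at least one of $\alpha,\beta$ is a non-integer rational. Knowing only that $P(F)\subset\mathbb{Z}[\alpha,\alpha^{-1}]\cap\mathbb{Z}[\beta,\beta^{-1}]$ is \emph{not} enough to force $P(F)\subset\mathbb{Z}$: for instance, with $\alpha=3/2$ and $\beta=5/2$ one has $\mathbb{Z}[\alpha,\alpha^{-1}]=\mathbb{Z}[1/6]$ and $\mathbb{Z}[\beta,\beta^{-1}]=\mathbb{Z}[1/10]$, whose intersection is $\mathbb{Z}[1/2]$, not $\mathbb{Z}$. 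Your proposed mechanism --- extracting a diophantine relation from coincidences $\alpha^m s + r_1 = \beta^n t + r_2$ between the $\alpha$-classes and $\beta$-classes --- does not produce a bound on denominators: such coincidences are abundant (every element of $P(F)$ lies in both an $\alpha$-class and a $\beta$-class) and, since $r_1,r_2$ are allowed to range over $\mathbb{Z}[\alpha,\alpha^{-1}]$ and $\mathbb{Z}[\beta,\beta^{-1}]$ respectively, they impose no nontrivial constraint linking $\alpha$ and $\beta$. Lemma~\ref{finiteness} bounds the number of classes, not the denominators inside a class.

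The paper closes this gap with an idea you are missing entirely: Lemma~\ref{combo} shows that for every $|n|,|m|\le N$ there is an integer $l$ with $F(x^l)$ satisfying an $(\alpha^n\beta^m)$-Mahler equation over $K[x]$. One then applies Lemma~\ref{cover} to \emph{each} of these mixed equations, obtaining $P(G)\subset\bigcap_{|n|,|m|\le N}\mathbb{Z}[\alpha^n\beta^m,(\alpha^n\beta^m)^{-1}]$ for $G=F(x^l)$ with a suitably enlarged $l$. The point is that for any prime $p$ one can solve $v_p(\alpha^n\beta^m)=n\,v_p(\alpha)+m\,v_p(\beta)=0$ in bounded $n,m$, so the corresponding ring in the intersection excludes $1/p$; intersecting over all primes dividing the numerators and denominators of $\alpha,\beta$ forces $P(G)\subset\mathbb{Z}$. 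Only then does Theorem~\ref{ez} apply. This construction of auxiliary $(\alpha^n\beta^m)$-Mahler equations --- not a direct comparison of the $\alpha$- and $\beta$-equivalence classes --- is the substantive step, and your proposal does not contain it.
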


We begin by strengthening the previous Lemma. Our goal is to show we can restrict to where $P(F) \subset \mathbb{Z}[\alpha, \alpha^{-1}]$, and similarly for
$\beta$. Decompose $F$ across equivalence classes as in theorem \ref{decomp}.
By focusing on each term of the decomposition individually we can
restrict to where $P(F) \subset T(s)$ for some properly chosen $s$ by Lemma \ref{decomp}.
That is, suppose $P(F) \subset \alpha^\mathbb{Z}s + \mathbb{Z}[\alpha, \alpha^{-1}]$ for some $s \not\sim 0 $.
We show that $s$ must be rational. In the case that $s$ is rational, we can take $F(x)$ to a large enough
power by the operator $x \mapsto x^l$.\\

\begin{lem}
	Let $F$ be $\alpha$-Mahler.
	Then the
	functions $F$ and $_{T(s)}[F]$ satisfy the same $\alpha$-Mahler function.
\end{lem}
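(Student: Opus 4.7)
The plan is to show that the projection $F \mapsto {}_{T(s)}[F]$ commutes with the operations appearing in the Mahler equation (multiplication by a polynomial and substitution $x \mapsto x^{\alpha^i}$), and then to invoke the disjointness of distinct equivalence classes $T(s)$ to split the functional equation into pieces that must individually vanish.

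First I would verify the essential closure property: $T(s) = \alpha^{\mathbb{Z}} s + \mathbb{Z}[\alpha, \alpha^{-1}]$ is stable under the map $e \mapsto \alpha^i e$ for every $i \in \mathbb{Z}$ (since $\alpha^i(\alpha^k s + p) = \alpha^{k+i} s + \alpha^i p$ and $\alpha^i p \in \mathbb{Z}[\alpha, \alpha^{-1}]$) and under addition by $\mathbb{Z}$ (trivially). As a consequence, for any polynomial $P(x) \in K[x]$ and any Hahn series $G$, the operator $G \mapsto P(x) G(x^{\alpha^i})$ sends terms with exponent in $T(s)$ to terms with exponent in $T(s)$, and therefore respects the projection ${}_{T(s)}[\cdot]$.

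Next I would write $F = \sum_{s \in \widehat{P(F)}} {}_{T(s)}[F]$ as given by Theorem \ref{decomp} and substitute into the Mahler equation
\begin{equation}
\sum_{i=0}^{d} P_i(x) F(x^{\alpha^i}) = 0.
\end{equation}
Because $P_i(x) F(x^{\alpha^i}) = \sum_s P_i(x) \, {}_{T(s)}[F](x^{\alpha^i})$, interchanging finite sums (finiteness guaranteed by Lemma \ref{finiteness}) gives
\begin{equation}
\sum_{s \in \widehat{P(F)}} \left( \sum_{i=0}^{d} P_i(x) \, {}_{T(s)}[F](x^{\alpha^i}) \right) = 0.
\end{equation}

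Finally I would observe that, by the closure property above, each inner bracket is a Hahn series whose support lies entirely in $T(s)$. Since the equivalence classes $T(s)$ for distinct $s \in \widehat{P(F)}$ are pairwise disjoint, the overall sum of disjointly-supported Hahn series can only vanish if every summand vanishes. That forces
\begin{equation}
\sum_{i=0}^{d} P_i(x) \, {}_{T(s)}[F](x^{\alpha^i}) = 0
\end{equation}
for each $s$, which is exactly the statement that ${}_{T(s)}[F]$ satisfies the same $\alpha$-Mahler equation as $F$. The only genuine subtlety is the closure check, and perhaps being careful that the projection really does commute with multiplication by polynomials — both of which reduce to the fact that $T(s)$ is stable under $\mathbb{Z}$-translations and under scaling by $\alpha^{\mathbb{Z}}$.
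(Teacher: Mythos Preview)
Your argument is correct and rests on the same idea as the paper's: the equivalence class $T(s)$ is stable under integer translation and under scaling by $\alpha^{\mathbb{Z}}$, so the projection ${}_{T(s)}[\cdot]$ commutes with the operators $G \mapsto P_i(x)G(x^{\alpha^i})$. The paper simply applies ${}_{T(s)}[\cdot]$ directly to the functional equation and invokes $K[x]$-linearity, whereas you route through the decomposition of Theorem~\ref{decomp} and the disjointness of the classes; this is a minor presentational detour but not a genuinely different method.
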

\begin{proof}
	The proof is trivial, begin with
	\begin{equation}
	\sum_{n=0}^{d} P_d(x)F(x^{\alpha^i}) = 0
	\end{equation}
	and apply the $_{T(s)}[\cdot]$ to the equation. Since $\mathbb{Z} + T(s) = T(s)$
	it follows that the operator is linear over $K[x]$.
\end{proof}

\begin{lem}{\label{decomp1}}
	Suppose $s$ is irrational. Then $_{T(s)}[F] = 0$.
\end{lem}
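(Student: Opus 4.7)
The plan is to argue by contradiction via a valuation-theoretic analysis of the minimum element of the support. Suppose for contradiction that $G := {}_{T(s)}[F]$ is nonzero. By the preceding lemma $G$ satisfies the same $\alpha$-Mahler equation $\sum_{i=0}^d P_i(x) G(x^{\alpha^i}) = 0$ that $F$ does. Since $P(G) \subseteq P(F)$ is well-ordered and nonempty, $v_G := v(G) = \min P(G)$ exists, lies in $T(s)$, and the coefficient of $x^{v_G}$ in $G$ is nonzero.

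Next I examine the Mahler equation at the smallest exponent on the left-hand side. Each nonzero summand $P_i(x) G(x^{\alpha^i})$ has valuation $v(P_i) + \alpha^i v_G$ with nonzero leading coefficient, so for the leading terms to cancel there must be two indices $i \ne j$ both attaining the minimum, i.e.\ $v(P_i) + \alpha^i v_G = v(P_j) + \alpha^j v_G$. (The degenerate case in which only a single $P_k$ is nonzero reduces immediately to $G = 0$, since then $P_k(x) G(x^{\alpha^k}) = 0$.) Solving gives $v_G = (v(P_j) - v(P_i))/(\alpha^i - \alpha^j)$, and because $\alpha \in \mathbb{Q}$ with $\alpha^i \ne \alpha^j$ and each $v(P_k) \in \mathbb{Z}$, this forces $v_G \in \mathbb{Q}$.

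On the other hand, membership $v_G \in T(s) = \alpha^{\mathbb{Z}} s + \mathbb{Z}[\alpha, \alpha^{-1}]$ yields a representation $v_G = \alpha^n s + r$ with $n \in \mathbb{Z}$ and $r \in \mathbb{Z}[\alpha,\alpha^{-1}] \subset \mathbb{Q}$. Combining this with the rationality of $v_G$ gives $\alpha^n s = v_G - r \in \mathbb{Q}$, and since $\alpha^n \in \mathbb{Q}^{\times}$ we conclude $s \in \mathbb{Q}$, contradicting the hypothesis that $s$ is irrational. Hence $G = 0$.

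There is no serious obstacle here: the proof is essentially a clean min-valuation computation exploiting the rationality of $\alpha$ and the well-ordering of the Hahn support. The only delicate point is ensuring that at least two polynomials $P_k$ in the Mahler equation are nonzero (handled parenthetically above). It is worth remarking that the $\beta$-Mahler property of $F$ plays no role in this lemma, despite the ambient setting of the section — the $\alpha$-structure alone is enough to eliminate irrational equivalence classes, and the $\beta$-equation will only enter later when one analyzes the remaining rational components.
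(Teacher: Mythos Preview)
Your proof is correct and takes a genuinely different, more economical route than the paper. The paper argues by iteration on exponents: using the irrationality of $s$ to guarantee that every $f\in T(s)$ has a \emph{unique} representation $f=\alpha^n s+p$, it assigns to $f$ a lexicographic ``degree'' $(n,\deg p)$, and then shows that rewriting $f=c+\alpha^i f_0$ via the Mahler relation produces a sequence $(f_j)\subset P(G)$ whose degrees strictly decrease; the resulting sequence is shown to converge, contradicting the well-ordering of $P(G)$. You instead recycle the valuation trick that the paper uses elsewhere (for the finiteness Lemma~\ref{finiteness}) and apply it directly to $G={}_{T(s)}[F]$: cancellation of leading terms forces $(\alpha^i-\alpha^j)v_G\in\mathbb{Z}$ for some $i\ne j$, hence $v_G\in\mathbb{Q}$, which is impossible for $v_G\in T(s)$ with $s$ irrational and $\alpha$ rational. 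Your argument is shorter and avoids the somewhat delicate convergence step; the paper's iterative approach yields finer structural information about how the Mahler operator moves exponents inside $T(s)$, but that information is not used later, so nothing is lost by your shortcut. Your closing remark that the $\beta$-equation plays no role here is also accurate.
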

\begin{proof}
Suppose $s$ is irrational. We will show that this forces $F = 0$. Notice that if $s$ is irrational then the only solutions to
\begin{equation}
	\alpha^m s + p_1 = \alpha^ns + p_2
\end{equation}
where $m,n$ are integers and $p_1, p_2 \in \mathbb{Z}[\alpha, \alpha^{-1}]$ is when $m = n$ and $p_1 = p_2$. We aim to contradict the well-ordered property of $P(F)$.
We know $F$ must satisfy
\begin{equation}
	P_0(x)F(x) = \sum_{n=1}^{d} P_d(x)F(x^{\alpha^i}).
\end{equation}
We can write every element of $f \in T(s)$ uniquely as $\alpha^n s + p$ with $n \in \mathbb{Z}$ and $p \in \mathbb{Z}[\alpha, \alpha^{-1}]$.
We call the pair $(n ,\deg(p))$ the degree of $f$. The degree of $p \in \mathbb{Z}[\alpha, \alpha^{-1}]$ is the largest
power of $\alpha$. Suppose $f \in P(F)$. So a term of form $x^f$ appears in the left hand side of the above equation. Of course, it must also come from a
term on the right hand side so
\begin{equation}
	f = c + \alpha^i f_0
\end{equation}
where $c$ is a term coming from rational scaling terms $P_i$ and $f_0$ is another element of $P(F) \subset T(s)$. Now notice the degree of $f_0$ is either
$(n-i, \deg(p) -i)$ or $(n-i, -i)$ depending on whether the constant $c$ comes into play. Apply the same argument to $f_0$ inductively to get a sequence
of $f_j$ and notice the degree lexicographically decreases every iteration. The sequence of polynomials which give the second argument of the degree either converges to $0$
or becomes arbitrarily close to $\frac{c}{\alpha^i}$ over all possible $c, i$, of which there are finitely many. In the latter case choose a subsequence of the $f_j$ so that
the polynomials converge to a fixed $\frac{c_0}{\alpha^{i_0}}$, in which case $f_j$ must also converge to the same value.\\

Recall that $P(F)$ must be well-ordered by the definition of Hahn series. By the convergence of the $f_j$, $P(F)$ is dense somewhere. The reader will notice
that this is a contradiction. This completes the proof. We move on to the case where $s$ is rational.\\
\end{proof}

\begin{lem}{\label{finaldecompo}}
	Suppose $_{\alpha^\mathbb{Z}s + \mathbb{Z}[\alpha,\alpha^{-1}]}[F] \ne 0$ and $F$ is $\alpha$-Mahler.
	Suppose furthermore $_{\beta^\mathbb{Z}s' + \mathbb{Z}[\beta, \beta^{-1}]}[F] \ne 0$ and $F$ is $\beta$-Mahler.
	Then there is an integer $n$ for which $_{\mathbb{Z}[\alpha, \alpha^{-1}]}[F(x^n)] \ne 0$
	and $_{\mathbb{Z}[\beta, \beta^{-1}]}[F(x^n)] \ne 0$.
\end{lem}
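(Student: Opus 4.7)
The plan is to combine the previous lemma with a straightforward denominator-clearing argument. The key observation is that the hypotheses already restrict $s$ and $s'$ to be rational, after which a single integer $n$ can be chosen to send the relevant witnesses into the two rings $\mathbb{Z}[\alpha, \alpha^{-1}]$ and $\mathbb{Z}[\beta, \beta^{-1}]$ simultaneously.

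First I would appeal to Lemma \ref{decomp1}, applied once with respect to the $\alpha$-Mahler equation and once with respect to the $\beta$-Mahler equation. Since $_{T(s)}[F] \ne 0$ in the $\alpha$-setting and the analogous projection is nonzero in the $\beta$-setting, that lemma forces both $s$ and $s'$ to be rational; write $s = a/d$ and $s' = a'/d'$ with $d, d' \in \mathbb{Z}_{>0}$. Next I would pick witnesses $t \in P(F) \cap (\alpha^{\mathbb{Z}}s + \mathbb{Z}[\alpha,\alpha^{-1}])$ and $t' \in P(F) \cap (\beta^{\mathbb{Z}}s' + \mathbb{Z}[\beta,\beta^{-1}])$, which exist by the nonvanishing hypotheses, and expand them as $t = \alpha^m s + p$ and $t' = \beta^{m'} s' + p'$ with $m, m' \in \mathbb{Z}$, $p \in \mathbb{Z}[\alpha,\alpha^{-1}]$, $p' \in \mathbb{Z}[\beta,\beta^{-1}]$.

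Then I would set $n = \operatorname{lcm}(d, d')$ (the product $dd'$ works equally well). A direct computation gives
\[
    nt = \alpha^m (ns) + np,
\]
and since $ns \in \mathbb{Z}$ while $\alpha^m \in \mathbb{Z}[\alpha,\alpha^{-1}]$ and $np \in \mathbb{Z}[\alpha,\alpha^{-1}]$, we conclude $nt \in \mathbb{Z}[\alpha,\alpha^{-1}]$. The symmetric calculation yields $nt' \in \mathbb{Z}[\beta,\beta^{-1}]$. Because $t \in P(F)$, the coefficient of $x^{nt}$ in $F(x^n)$ equals the nonzero coefficient of $x^t$ in $F$, so $nt \in P(F(x^n)) \cap \mathbb{Z}[\alpha,\alpha^{-1}]$ and therefore $_{\mathbb{Z}[\alpha,\alpha^{-1}]}[F(x^n)] \ne 0$; the analogous conclusion for $\beta$ follows in the same way.

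There is essentially no obstacle beyond bookkeeping: the only thing that must be checked is that $\mathbb{Z}[\alpha,\alpha^{-1}]$ is closed under integer scaling and contains $\alpha^m$ for every $m \in \mathbb{Z}$, both of which are immediate from its definition. The content of the lemma is really just the combination of the rationality of $s, s'$ (supplied by Lemma \ref{decomp1}) with this clearing of denominators.
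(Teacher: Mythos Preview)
Your proposal is correct and follows essentially the same route as the paper: invoke Lemma~\ref{decomp1} to force $s$ and $s'$ to be rational, then clear denominators by passing to $F(x^{n})$ with $n$ a common multiple of the two denominators. If anything, your write-up is more careful than the paper's, which asserts the stronger containment $P(F(x^{l\cdot l'}))\subset\mathbb{Z}[\alpha,\alpha^{-1}]$ rather than the mere nonvanishing of the projection that the lemma actually states; your witness argument with $t$ and $t'$ matches the stated conclusion exactly.
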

\begin{proof}
We have seen that if $s$ is irrational, then $_{T(s)}[F] = 0$.
Suppose $s = \frac{k}{l}$ is rational.
Apply $x \mapsto x^l$ to $F$ to get a function, $G$,  with $P(G) \subset \mathbb{Z}[\alpha, \alpha^{-1}]$. Do
a similar operation to $F$ for $\beta$. Suppose we decompose $F$  with $_{T(s')}[F] \ne 0$, and
$T(s')= \beta^\mathbb{Z} s' + \mathbb{Z}[\beta, \beta^{-1}]$
where $s' = \frac{k'}{l'}$. Taking $x \mapsto x^{l\cdot l'}$ gives $P(F) \subset \mathbb{Z}[\alpha, \alpha^{-1}]$
and $P(F) \subset \mathbb{Z}[\beta, \beta^{-1}]$.
Then we can proceed to the below proof.\\
\end{proof}

The case where both $\alpha$ and $\beta$ are integers is the Adamczewski-Bell theorem \cite{Bell19}.
Therefore we can assume at least one of $\alpha$ or $\beta$ is not an integer.\\

We quickly deal with the case that $\beta$ is an integer.
Assume that $\beta$ is an integer. By the theorem we just proved, \ref{decomp} ,we have that $P(F) \subset \mathbb{Z}$ in which case $\alpha$ being a non-integer rational and $F(x)$ being $\alpha$-Mahler
contradicts the theorem established two sections ago, theorem \ref{ez} unless $F(x)$ is rational.\\

We begin the main thrust with a Lemma.
\begin{lem}{\label{combo}}
	Fix an $N > 0$. There is an integer $l$ so that,
	if $F(x)$ is $\alpha$ and $\beta$-Mahler then $F(x^l)$ is $\alpha^n \beta^m$-Mahler for all integers $m, n$ with $|m| < N$ and $|n| < N$.
\end{lem}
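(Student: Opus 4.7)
The plan is to construct, for each fixed pair $(n,m)$ with $|n|,|m|<N$, an $\alpha^n\beta^m$-Mahler-type equation for $F$ whose coefficients a priori lie in $K[x^{1/L_{n,m}}]$, and then choose $l$ to be a common multiple of the $L_{n,m}$, so that the substitution $x\mapsto x^l$ absorbs the fractional powers and produces a genuine polynomial-coefficient Mahler equation for $G(x)=F(x^l)$. First I would apply Lemma \ref{inv} and Lemma \ref{homo} together with the reduction ensuring $P_0(x), Q_0(x)\neq 0$ to normalize the two Mahler equations: homogeneous, with $\alpha,\beta>1$, and nonvanishing leading and trailing coefficients. Write $\alpha=p_1/q$ and $\beta=p_2/q$ throughout.

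Next, introduce the field $\Omega := \bigcup_{k\geq 0} K(x^{1/(p_1 p_2 q)^k})$, which is preserved by each of the shifts $x\mapsto x^{\alpha^{\pm 1}}$ and $x\mapsto x^{\beta^{\pm 1}}$. The key structural claim is that the $\Omega$-vector space
\[
W \;:=\; \Omega\text{-span}\bigl\{F(x^{\alpha^a \beta^b}) : a,b \in \mathbb{Z}\bigr\}
\]
has $\Omega$-dimension at most $d_\alpha d_\beta$, where $d_\alpha$ and $d_\beta$ are the Mahler degrees. Iterating the $\alpha$-Mahler relation forward uses nonvanishing of $P_{d_\alpha}$ to express $F(x^{\alpha^a})$ for $a\geq d_\alpha$ as an $\Omega$-linear combination of $F(x^{\alpha^i})$ with $0\leq i<d_\alpha$; iterating backward uses nonvanishing of $P_0$, after applying $x\mapsto x^{1/\alpha^a}$, to handle $a<0$. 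Then applying $x\mapsto x^{\beta^b}$ and invoking the $\beta$-analogue yields the spanning set $\{F(x^{\alpha^i \beta^j}) : 0\leq i<d_\alpha,\ 0\leq j<d_\beta\}$.

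Fixing $|n|,|m|<N$ and $\gamma=\alpha^n\beta^m$, the $d_\alpha d_\beta + 1$ elements $F(x), F(x^\gamma), \ldots, F(x^{\gamma^{d_\alpha d_\beta}})$ all live in $W$, so they satisfy a nontrivial $\Omega$-linear relation
\[
\sum_{i=0}^{d_\alpha d_\beta} \rho_i^{(n,m)}(x)\, F(x^{\gamma^i}) \;=\; 0,
\]
with $\rho_i^{(n,m)} \in K[x^{1/L_{n,m}}]$ after clearing denominators. Each basic shift contributes at most one factor of $p_1 p_2 q$ to the denominator of the exponent, and the number of shifts required to produce the relation is linear in $N$ and polynomial in $d_\alpha, d_\beta$, so one can take $l=(p_1 p_2 q)^{C N d_\alpha d_\beta}$ for an absolute constant $C$ as a simultaneous upper bound on all $L_{n,m}$. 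Substituting $x\mapsto x^l$ in each relation converts $\rho_i^{(n,m)}(x^{1/l})$ into an honest polynomial in $x$, which is precisely the statement that $G(x)=F(x^l)$ is $\alpha^n\beta^m$-Mahler.

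The main obstacle is the denominator bookkeeping in the second paragraph. One must confirm that expressing $F(x^{\alpha^a \beta^b})$ in the claimed $d_\alpha d_\beta$-element basis costs only $O(|a|+|b|)$ factors of $p_1 p_2 q$ in the denominator of the $\Omega$-coefficients, and not more; the subtle cases involve backward iteration ($a<0$ or $b<0$), where the inverted shift operator maps $K[x]$ into $K[x^{1/p_i}]$ rather than $K[x^{1/q}]$, so one has to account for both sides of the fraction $p_i/q$. Once this linear growth is verified, the uniform bound $l=(p_1 p_2 q)^{O(N)}$ follows and handles all of the finitely many $(n,m)$ with $|n|,|m|<N$ simultaneously.
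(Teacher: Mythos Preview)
Your proposal is correct and follows essentially the same route as the paper: both arguments show that the $\{F(x^{\alpha^i\beta^j})\}$ span a space of dimension at most $d_\alpha d_\beta$ over a coefficient ring allowing fractional powers of $x$, deduce a linear dependence among $F(x),F(x^\gamma),\ldots,F(x^{\gamma^{d_\alpha d_\beta}})$ for each $\gamma=\alpha^n\beta^m$, and then substitute $x\mapsto x^l$ to clear the fractional exponents. Your version is arguably cleaner in working over the field $\Omega=\bigcup_k K(x^{1/(p_1p_2q)^k})$ rather than the ring $K[x^{1/l}]$, and in making the denominator growth (hence the choice of $l$) explicit; the paper simply asserts that a sufficiently large $l$ works.
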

\begin{proof}
	The idea is the same as in proposition $8.1$ in Adamczewski-Bell \cite{}. We must be more careful to take the powers appearing in the polynomials into account.
	Suppose $F(x)$ is $\alpha$-Mahler of degree $d_1$ and $\beta$-Mahler of degree $d_2$. Consider the functional equations defining $F(x)$:
	\begin{equation}{\label{a}}
		\sum_{i=0}^{d_1}P_i(x)F(x^{\alpha^i}) = 0
	\end{equation}
	\begin{equation}{\label{b}}
		\sum_{i=0}^{d_1}Q_i(x)F(x^{\beta^i}) = 0.
	\end{equation}
	Applying the $x \mapsto x^{\alpha^i}$ to equation gives  a linear dependence for $F(x^{\alpha^{d_1+i}})$ in terms of lower order terms over $K[x^{\alpha^i}]$.
	Using the formula for the lower order terms gives $F(x^{\alpha^{d_1+1}})$ in the span
	\begin{equation}{\label{pozzed}}
		F(x^{\alpha^{d_1+1}}) \in \Span_{K[x^{\alpha^i}]}\{F(x) , \ldots , F(x^{\alpha^{d_1-1}})\}.
	\end{equation}
	Of course, the same fact holds for $\beta$ after making the necessary changes. In case, we are interested in $F(x^{\alpha^m})$ for $m < 0$ apply $x \mapsto x^{\alpha^m}$.
	This gives a linear dependence of for $F(x^{\alpha^m})$ but in terms of higher degree terms over the coefficient field $K[x^{\alpha^m}]$. Inductively,
	we conclude the negative version of equation \ref{pozzed}:
	\begin{equation}
		F(x^{\alpha^{m}}) \in \Span_{K[x^{\alpha^m}]}\{F(x) , \ldots , F(x^{\alpha^{d_1-1}})\}.
	\end{equation}
	The same holds for $\beta$. Let $$V_l = \Span_{K[x^{\frac{1}{l}}]}\{F(x^{\alpha^i \beta^j})\}_{\substack{0 \leq i \leq d_1-1 \\ 0 \leq j \leq d_2-1}}.$$
	Notice that if $l$ is sufficiently large then by the above logic we know $F(x^{\alpha^{n_0} \beta^{m_0}})$ is in $V_l$ for all $|n_0| <d_1d_2N$ and $|m_0| < d_1d_2N$.
	Since the dimension of $V_l$ is at most $d_1d_2$ we know and $F(x^{\alpha^{jn} \beta^{jm}})$ is in $V_l$ for all $j = 0, \ldots d_1d_2$.\\

	From this we conclude that $F(x)$ is $\alpha^n \beta^m$-Mahler of degree at most $d_1d_2$, over the coefficient ring $K[x^{\frac{1}{l}}]$.
	Finally, applying $x \mapsto x^{l}$ gives the desired conclusion.
\end{proof}

So we consider instead $G(x) = F(x^l)$. We haven't yet specified $N$, and $l$ depends on it.  We will leave it unspecified for now. Let $p$ be a prime and $v_p$, the $p$-adic valuation.
We need one more quick lemma before we embark on the final stretch of the rational case.
\begin{lem}
	There is some natural number $N$ for which
	\begin{equation}
		v_p(\alpha^n \beta^m) = 0
	\end{equation}
	has a solution in $n$ and $m$, for any prime $p$, with $|n|$ and $|m|$ bounded by $N$.
\end{lem}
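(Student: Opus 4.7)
The plan is to exploit the identity $v_p(\alpha^n\beta^m) = n\,v_p(\alpha) + m\,v_p(\beta)$ and then use that $v_p(\alpha)$ and $v_p(\beta)$ both vanish outside a finite set of primes. Writing $\alpha = p_1/q$ and $\beta = p_2/q$, one has $v_p(\alpha) = v_p(p_1) - v_p(q)$ and $v_p(\beta) = v_p(p_2) - v_p(q)$, so both valuations are zero for any prime $p$ not dividing $p_1 p_2 q$. For such primes every choice of $(n,m)$ already solves the equation, so they impose no constraint on $N$ and can be set aside.

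For the finitely many primes $p$ dividing $p_1 p_2 q$, I would note that the integers $a := v_p(\alpha)$ and $b := v_p(\beta)$ satisfy $|a|, |b| \leq \log_2(p_1 p_2 q) =: M$, since $v_p(k) \leq \log_p(k) \leq \log_2(k)$ for any positive integer $k$. I would then exhibit the explicit pair $(n,m) = (-b, a)$, which solves $na + mb = 0$ and satisfies $|n|, |m| \leq M$. When $(a,b) = (0,0)$, any fixed nonzero pair, e.g.\ $(1,0)$, serves equally well. Taking $N := M + 1$ (or any integer exceeding $M$) yields the claimed uniform bound across all primes.

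The argument has essentially no obstacle: it is a finiteness observation combined with a one-line linear algebra solution. The only subtlety worth flagging is that, should the downstream application of this lemma require the pair $(n,m)$ to be nonzero, the construction $(-b, a)$ is already nonzero whenever $(a,b) \neq (0,0)$, and for the generic primes where both valuations vanish one may simply pick $(1,0)$. The uniform bound survives because the bad set of primes is finite and $M$ depends only on the fixed data $p_1, p_2, q$, not on the particular prime $p$.
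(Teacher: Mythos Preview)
Your argument is correct and matches the paper's proof essentially line for line: both expand $v_p(\alpha^n\beta^m)=n\,v_p(\alpha)+m\,v_p(\beta)$, take the obvious solution $(n,m)=(\pm v_p(\beta),\mp v_p(\alpha))$, and invoke the finiteness of primes dividing the numerators and denominator of $\alpha,\beta$ to get a uniform bound. The only cosmetic difference is that the paper scales its solution by an integer $k>d_1d_2$ (anticipating the downstream use), whereas you give an explicit numerical bound $N=\lfloor\log_2(p_1p_2q)\rfloor+1$; the underlying idea is identical.
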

\begin{proof}
	The proof is trivial; expand $v_p(\alpha^n \beta^m) = nv_p(\alpha)+mv_p(\beta)$ and notice we can take $n = k v_p(\beta), m = -kv_p(\alpha)$ for an integer $k$ larger than, say $d_1d_2$.
	Since only finitely many primes appear in the expansions of $\alpha$ or $\beta$, a finite $N$ will indeed exist.
\end{proof}

Moving on to the proof of theorem \ref{main}.

\begin{lem}
	Let $N$ be a natural number.
	Let $F(x)$ an arbitrary $\alpha$-Mahler Hahn series with $\alpha$ rational.
	We can choose an integer $l$ for which the support of $G(x) = F(x^l)$ is
\begin{equation}
	P(G) \subset \bigcap_{|n|,|m| \leq N} \mathbb{Z}[\alpha^n \beta^m, \left(\alpha^n \beta^m\right)^{-1}].
\end{equation}
\end{lem}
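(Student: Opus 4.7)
The idea is to feed the output of Lemma~\ref{combo} into Lemma~\ref{cover}, iterating the latter once for each of the finitely many pairs $(n,m)$ in the index set $I := \{(n,m) \in \mathbb{Z}^2 : |n|, |m| \leq N\}$, and then to bundle everything into a single exponent via a least common multiple. Throughout I use the standing assumption of this section that $F$ is both $\alpha$- and $\beta$-Mahler.

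First I invoke Lemma~\ref{combo} (replacing $N$ by $N+1$ to accommodate the closed inequality $|n|,|m|\leq N$) to obtain an integer $l_0$ such that $F_0(x) := F(x^{l_0})$ is $\alpha^n\beta^m$-Mahler for every $(n,m)\in I$. Each $\alpha^n\beta^m$ is a positive rational, so for each such pair I can then apply Lemma~\ref{cover} to $F_0$ with parameter $\alpha^n\beta^m$ to produce an integer $l_{n,m}$ with
\[
	P\bigl(F_0(x^{l_{n,m}})\bigr) \subset \mathbb{Z}[\alpha^n\beta^m,(\alpha^n\beta^m)^{-1}].
\]
Finally I set $l := l_0 \cdot \operatorname{lcm}_{(n,m)\in I} l_{n,m}$ and let $G(x) := F(x^l)$.

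The verification is short. Fix any $(n_0,m_0) \in I$ and write $l = l_0 l_{n_0,m_0} k$ for a suitable positive integer $k$. Then $P(G) = l\cdot P(F) = k\cdot P\bigl(F_0(x^{l_{n_0,m_0}})\bigr)$. The critical observation is that $\mathbb{Z}[\alpha^{n_0}\beta^{m_0},(\alpha^{n_0}\beta^{m_0})^{-1}]$ is a subring of $\mathbb{Q}$ containing $\mathbb{Z}$, so it is automatically closed under multiplication by the integer $k$. Hence $P(G) \subset \mathbb{Z}[\alpha^{n_0}\beta^{m_0},(\alpha^{n_0}\beta^{m_0})^{-1}]$, and since $(n_0,m_0)$ was arbitrary in $I$, $P(G)$ lies in the desired intersection.

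The main (mild) obstacle is purely organizational: making sure that once a particular exponent $l_{n,m}$ has been found, further enlarging the substitution exponent of $x$ by an integer factor does not push the support back out of the target ring. This is precisely why I take an lcm rather than nesting substitutions, and the closure of each $\mathbb{Z}[\gamma,\gamma^{-1}]$ under multiplication by $\mathbb{Z}$ handles the rest. No new mathematical content is required — the lemma is essentially a packaging statement that combines Lemma~\ref{combo} with a finite iteration of Lemma~\ref{cover}.
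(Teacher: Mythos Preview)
Your proposal is correct and follows essentially the same route as the paper: apply Lemma~\ref{combo} to get $l_0$, then Lemma~\ref{cover} once for each pair $(n,m)$, and bundle the resulting exponents into a single $l$ (the paper takes a product where you take $l_0\cdot\operatorname{lcm}$, an inconsequential difference). Your explicit verification that further integer scaling keeps the support inside each ring $\mathbb{Z}[\gamma,\gamma^{-1}]$, and your adjustment $N\mapsto N+1$ for the strict inequality in Lemma~\ref{combo}, are in fact tidier than the paper's version.
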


\begin{proof}
	By Lemma \ref{combo} there is an integer $l_0$ for which $F(x^{l_0})$ is $\alpha^n \beta^m$-Mahler for all integers $m, n$ with $|m| < N$ and $|n| < N$.
	By Lemma \ref{cover}, for each of the $M$ different $\alpha^n \beta^m$-Mahler functional equations we obtained ($n$ and $m$ vary)
	there is another integer $l_i$ ($i$ goes from $1$ to $M$) for which
	$P(F(x^{l_0 l_i})) \subset \mathbb{Z}[\alpha^n \beta^m , \alpha^{-n} \beta^{-m}]$.
	Let
	\[
		l = \prod_{i= 0}^{M}l_i.
	\]
	Then if $G(x) = F(x^l)$ we have
	\[
	P(G) \subset \bigcap_{|n|,|m| \leq N} \mathbb{Z}[\alpha^n \beta^m, \left(\alpha^n \beta^m\right)^{-1}]
	\]
	as required.

\end{proof}

Using the above Lemma we can complete the proof in a few lines.
Take any prime, $p$. By construction, there is an element of the intersection, $\mathbb{Z}[\alpha^{n'} \beta^{m'}]$ which does not contain $\frac{1}{p}$.
From this we conclude
\begin{equation}
	P(G) \subset \mathbb{Z}.
\end{equation}
But theorem \ref{ez} forbids this. Contradiction.\\

\section{Doubly irrational Mahler Hahn series}
Some care needs to be devoted to the case where one of $\alpha$ or $\beta$ is irrational. For this section we assume $\alpha$ and $\beta$ are algebraically independent.
We demonstrate the analogous theorem to theorem \ref{main} in this case, theorem \ref{main2}.
\begin{thm*}
	If $F(x)$ is a Hahn series that is both $\alpha$- and $\beta$-Mahler then $F(x) \in K(x)$.
\end{thm*}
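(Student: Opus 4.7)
The plan is to mirror Theorem \ref{main}, using algebraic independence of $\alpha, \beta$ (which in particular forces both to be transcendental over $\mathbb{Q}$) in place of multiplicative independence of rationals. First I would note that the inhomogeneous version of Theorem \ref{decomp} still decomposes $F$ into a finite sum $F = \sum_{s} {}_{T_\alpha(s)}[F]$, where for each class with $s \not\sim 0$ the decomposition is automatically homogeneous: the right-hand side polynomial $A(x)$ has support in $\mathbb{Z} \subset T_\alpha(0)$, so applying ${}_{T_\alpha(s)}[\cdot]$ for $s \not\sim 0$ kills it. This neatly sidesteps the rational-denominator argument in Lemma \ref{homo}, which does not literally apply when $\alpha$ is transcendental.

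The critical observation is that Lemma \ref{decomp1} extends verbatim to transcendental $\alpha$, with ``irrational'' reinterpreted as ``not in $\mathbb{Q}(\alpha)$''. The only use of rationality of $\alpha$ there was the step that $\alpha^m s + p_1 = \alpha^n s + p_2$ with $p_1, p_2 \in \mathbb{Z}[\alpha, \alpha^{-1}]$ and $m \neq n$ forces $s \in \mathbb{Q}$; in the transcendental case the same manipulation yields $s = (p_2 - p_1)/(\alpha^m - \alpha^n) \in \mathbb{Q}(\alpha)$, and the degree-lexicographic descent that produces an accumulation point of $P(F)$ from above proceeds unchanged. Applying this conclusion for both operators gives $P(F) \subset \mathbb{Q}(\alpha)$ and $P(F) \subset \mathbb{Q}(\beta)$. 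Algebraic independence of $\alpha, \beta$ now forces $\mathbb{Q}(\alpha) \cap \mathbb{Q}(\beta) = \mathbb{Q}$: any common element $\gamma = P_1(\alpha)/Q_1(\alpha) = P_2(\beta)/Q_2(\beta)$ gives the polynomial identity $P_1(X) Q_2(Y) = P_2(Y) Q_1(X)$ in $\mathbb{Q}[X, Y]$, and since the two sides depend on disjoint variables, both $P_i / Q_i$ are constant, so $\gamma \in \mathbb{Q}$. Therefore $P(F) \subset \mathbb{Q}$.

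By Lemma \ref{finiteness}, $P(F)$ meets only finitely many $T_\alpha$-classes, each with a rational representative; let $L$ be their common denominator and set $G(x) := F(x^L)$, a Laurent series that inherits both Mahler properties via $x \mapsto x^L$. The final step is to show that a Laurent series $G$ which is $\alpha$-Mahler for transcendental $\alpha$ must be rational. Writing $\sum_{i=0}^d P_i(x) G(x^{\alpha^i}) = 0$, any monomial $x^f$ with non-integer exponent has the form $f = c + n \alpha^i$ with $c \in \mathbb{Z}$, $n \in P(G) \setminus \{0\}$ and $i \geq 1$; transcendence of $\alpha$ makes the triple $(i, c, n)$ uniquely determined by $f$, so the coefficient $p_{i,c}\, g_n$ must vanish individually. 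Since $P_d \neq 0$, this forces $g_n = 0$ for every $n \neq 0$, so $G$ is constant and hence $F$ is rational.

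The main obstacle will be the careful extension of Lemma \ref{decomp1} to the transcendental setting: one must verify that the descent genuinely produces an accumulation point of $P(F)$ \emph{from above}, rather than merely from below (which would be consistent with well-ordering). This likely requires splitting into cases based on the sign of $s$ and tracking the convergence of the polynomial parts $p_j \in \mathbb{Z}[\alpha, \alpha^{-1}]$ explicitly; a secondary technical point is checking that Lemma \ref{finiteness} and the reductions of Section~3 remain valid in the transcendental setting, which should be routine once one observes that the inhomogeneous term lives in the trivial $T(0)$ class.
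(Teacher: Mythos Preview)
Your approach is correct but takes a substantially longer route than the paper. The paper's proof is a five-line valuation argument: look at $v(F)$, observe that the minimal exponent on the left of $\sum P_i(x)F(x^{\alpha^i})=0$ must be achieved by two distinct terms, giving $(\alpha^{l_1}-\alpha^{l_2})\,v(F)\in\mathbb{Z}$; the same reasoning for $\beta$ gives $(\beta^{k_1}-\beta^{k_2})\,v(F)\in\mathbb{Z}$, and dividing produces a nontrivial element of $\mathbb{Q}(\alpha)\cap\mathbb{Q}(\beta)\cap\mathbb{Q}$, contradicting algebraic independence (when $v(F)\neq 0$). There is no decomposition, no extension of Lemma~\ref{decomp1}, and no passage to Laurent series.

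Your argument instead ports the entire rational-case machinery over: decompose along $T_\alpha$-classes, extend Lemma~\ref{decomp1} to conclude $P(F)\subset\mathbb{Q}(\alpha)$, intersect with $\mathbb{Q}(\beta)$ to force $P(F)\subset\mathbb{Q}$, clear denominators, and finally prove a transcendental analogue of Theorem~\ref{ez}. Each step is sound, and your final step (a Laurent series that is $\alpha$-Mahler for transcendental $\alpha$ is constant, via linear independence of $1,\alpha,\ldots,\alpha^d$ over $\mathbb{Q}$) is a clean standalone observation. What your route buys is uniformity with Section~6 and a result about $P(F)$ that is strictly stronger than needed; what it costs is the obstacle you yourself flag, namely making the descent in Lemma~\ref{decomp1} rigorous in this setting. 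The paper sidesteps that entirely: because algebraic independence is so much stronger than multiplicative independence, one equation in $v(F)$ for each of $\alpha,\beta$ already suffices, and none of the support-analysis machinery is required.
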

\begin{proof}
	We know $F$ satisfies both
	\begin{equation}
		\sum_{i=0}^{d_1}P_i(x)F(x^{\alpha^i}) = 0
	\end{equation}
	and
	\begin{equation}
		\sum_{i=0}^{d_1}Q_i(x)F(x^{\beta^i}) = 0.
	\end{equation}
	Consider the minimal element of $P(F)$, namely, $v(F)$. Over all
	possibilities, there is some
	$l \leq d_1$ and integer $a$ coming from exponents of $x$ in $P_i$
	so that the quantity $\alpha^lv(F)+a$ is minimized. For the left hand side of
	the above to be $0$ the value $\alpha^lm+a$ must occur in two different terms.
	That is to say, there is some $l_1, l_2$ so that
	\[
		(\alpha^{l_1} - \alpha^{l_2})v(F) \in \mathbb{Z}.
	\]
	Mutatis Mutandis for $\beta$ in which case
	\[
		\left( \beta^{k_1} - \beta^{k_2} \right)v(F) \in \mathbb{Z}.
	\]
	But then, of course,
	\[
		\frac{\left( \alpha^{l_1} - \alpha^{l_2} \right)}
		{\left( \beta^{k_1} - \beta^{k_2} \right)} \in  \mathbb{Q}
	\]
	so $\alpha$ and $\beta$ cannot be algebraically independent.

\end{proof}

\section{Concluding Remarks}
To summarize, we have shown that there exist no Laurent series that are $\alpha$-Mahler for $\alpha$ multiplicatively independent from a natural number, other
than rational functions $F \in K(x)$. We then use this result to prove there exist no doubly Mahler Hahn series, $\alpha$- and $\beta$-Mahler Hahn series,
where $\alpha$ and $\beta$ are multiplicatively independent rationals and algebraically independent numbers. A question open for further investigation
is whether this result for irrational numbers can be relaxed to $\alpha$ and $\beta$ multiplicatively independent instead of algebraically independent:\\
\begin{conj}
	Let $\alpha$ and $\beta$ be irrational and multiplicatively independent. If $F(x)$ is a Hahn series over $\mathbb{R}$ that is both
	$\alpha$- and $\beta$-Mahler, then $F(x) \in K(x)$.\\
\end{conj}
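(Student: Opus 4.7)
The plan is to analyze the lowest-valuation term of each Mahler equation and deduce that its valuation would force an algebraic relation between $\alpha$ and $\beta$. Let $m = v(F)$. In the $\alpha$-equation $\sum_{i=0}^{d_1} P_i(x) F(x^{\alpha^i}) = 0$, the $i$-th summand has valuation $v(P_i) + \alpha^i m$ and contributes a single nonzero leading coefficient at that exponent. Choose $l$ minimizing $v(P_l) + \alpha^l m$. Since this is the global minimum of the LHS, its leading term can only be cancelled by a contribution at the same exponent from another summand, and by minimality that contribution must itself be the leading-order term of some $l' \neq l$, with $v(P_{l'}) + \alpha^{l'} m = v(P_l) + \alpha^l m$. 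This yields $(\alpha^l - \alpha^{l'})\, m \in \mathbb{Z}$. The same argument on the $\beta$-equation produces $k \neq k'$ with $(\beta^k - \beta^{k'})\, m \in \mathbb{Z}$.

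If $m \neq 0$, then clearing denominators gives nonzero integers $a_1, a_2$ with $a_2(\alpha^l - \alpha^{l'}) - a_1(\beta^k - \beta^{k'}) = 0$ (nonzero because $l \neq l'$ and $k \neq k'$). This is a nontrivial polynomial in $\mathbb{Z}[X,Y]$ vanishing at $(\alpha, \beta)$, contradicting algebraic independence. Hence $m = 0$: the support of $F$ is contained in $\mathbb{R}_{\geq 0}$ and contains $0$.

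To upgrade $m = 0$ to $F \in K(x)$, I would write $F = f_0 + G$ where $f_0$ is the constant term and $v(G) > 0$ (if $G \neq 0$). Substituting into both Mahler equations produces inhomogeneous relations $\sum P_i(x)\, G(x^{\alpha^i}) = -f_0 \sum P_i(x)$ and analogously for $\beta$. Rerun the valuation analysis on each: either the LHS valuation strictly undershoots that of the RHS polynomial, in which case the cancellation argument inside the LHS still yields $(\alpha^l - \alpha^{l'})\, v(G) \in \mathbb{Z}$ and we conclude as before; or the two valuations agree, and matching leading terms forces $\alpha^l v(G) \in \mathbb{Z}$ for the unique minimizing index $l$. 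Combined with the analogous $\beta^k v(G) \in \mathbb{Z}$, whenever $l, k \geq 1$ one again gets $\alpha^l / \beta^k \in \mathbb{Q}$, another polynomial relation forbidden by algebraic independence. In the residual sub-case where both minimizing indices are $0$, one only recovers $v(G) \in \mathbb{Z}$ and must iterate by peeling off the leading monomial of $G$ and repeating. In all branches $G = 0$, so $F = f_0 \in K \subset K(x)$.

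The main obstacle is the $m = 0$ step. The homogenization of Lemma \ref{homo} relies on the operator $x \mapsto x^{\alpha q}$ preserving polynomial coefficients, which fails for irrational $\alpha$, so we cannot simply convert the inhomogeneous equations for $G$ into homogeneous ones and reapply the first argument verbatim. The direct case analysis sketched above does work, but verifying that the degenerate sub-case (minimum on the LHS achieved uniquely at index $0$ in both equations) eventually terminates via iteration requires some bookkeeping, and this is where the technical weight of the proof should lie.
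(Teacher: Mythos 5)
There is a genuine gap, and it starts with the hypotheses. The statement you are proving is the paper's closing \emph{conjecture}, which assumes only that $\alpha$ and $\beta$ are irrational and \emph{multiplicatively} independent; the paper deliberately leaves it open because its own argument (the proof of Theorem \ref{main2}) only works under the stronger hypothesis of \emph{algebraic} independence. Your proposal is essentially that same valuation argument: extract $(\alpha^{l}-\alpha^{l'})\,m \in \mathbb{Z}$ and $(\beta^{k}-\beta^{k'})\,m \in \mathbb{Z}$ from the leading-term cancellation, then conclude by saying the resulting relation "contradicts algebraic independence." That conclusion is simply not available here: a relation such as $\bigl(\alpha^{l}-\alpha^{l'}\bigr)/\bigl(\beta^{k}-\beta^{k'}\bigr) \in \mathbb{Q}$ is a polynomial relation between $\alpha$ and $\beta$, and multiplicatively independent irrationals can perfectly well satisfy such relations. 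Worse, the cancellation condition can degenerate even for a single irrational: with $\alpha=\sqrt{2}$ one has $\alpha^{2}-\alpha^{0}=1$, so $(\alpha^{2}-\alpha^{0})\,m\in\mathbb{Z}$ only says $m\in\mathbb{Z}$ and yields no constraint at all. So the first step does not force $m=0$ under the conjecture's hypotheses; it only reproves (part of) Theorem \ref{main2}, which the paper already establishes by exactly this computation.

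Even granting the valuation step, the upgrade from "$m=0$" (or "$v(G)\in\mathbb{Z}$" in your degenerate sub-case) to $F\in K(x)$ is a second genuine gap. Peeling off the leading monomial and iterating removes one term at a time; a non-rational $F$ can have infinite well-ordered support inside $\mathbb{Z}_{\geq 0}$, and the iteration never terminates nor produces a contradiction by itself. Note that in the cases the paper does prove, concluding rationality requires real machinery beyond the valuation bound: the decomposition of the support into equivalence classes (Theorem \ref{decomp} and Lemma \ref{cover}), the nonexistence theorem for non-integer Mahler Laurent series (Theorem \ref{ez}), and an appeal to Adamczewski--Bell in the integer case. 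Nothing in your sketch substitutes for that layer, and as you yourself observe, the homogenization trick of Lemma \ref{homo} is also unavailable for irrational exponents, so even the setup of the iteration is not secured.
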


\textbf{Acknowledgements. }--- The author would like the thank Jason Bell for suggesting the problem and discussions surrounding the problem. Without him this paper would not
be possible.

\newpage
\nocite{*}
\printbibliography

\end{document}